\documentclass[11pt,reqno,a4paper]{amsart}
\usepackage{amsmath,amssymb,amsfonts} 
\usepackage{graphics}                 
\usepackage{color}       
           \usepackage{amssymb}
\usepackage{amsfonts}
\usepackage{amsmath, amsthm, amssymb, xypic}
\usepackage[english]{babel}
\usepackage[mathcal]{eucal}
\usepackage{fancyhdr}
\pagestyle{fancy}
\lhead{} 
\chead{AN ISOMETRIC STUDY OF THE LINDEBERG-FELLER CLT} 
\rhead{} 
\lfoot{} 
\cfoot{\thepage} 
\rfoot{}

\usepackage{pdfsync}
\allowdisplaybreaks

\DeclareMathOperator*{\myinf}{in\vphantom{p}f}

\begin{document}

\title{An isometric study of the Lindeberg-Feller CLT via Stein's method}
\author{B. Berckmoes \and R. Lowen \and J. Van Casteren}
\date{}

\subjclass[2000]{60A05, 65D99, 60F05, 60G05, 60G07, 62P99}
\keywords{approach structure, asymptotically negligible, central limit theorem, distance, Kolmogorov metric, limit, Lindeberg condition, probability measure, random variable,  Stein's method, triangular array, weak topology}
\thanks{Ben Berckmoes is PhD fellow at the Fund for Scientific Research of Flanders (FWO)}

\maketitle

\newtheorem{pro}{Proposition}[section]
\newtheorem{lem}[pro]{Lemma}
\newtheorem{thm}[pro]{Theorem}
\newtheorem{de}[pro]{Definition}
\newtheorem{co}[pro]{Comment}
\newtheorem{no}[pro]{Notation}
\newtheorem{vb}[pro]{Example}
\newtheorem{vbn}[pro]{Examples}
\newtheorem{gev}[pro]{Corollary}
\newtheorem{vrg}[pro]{Question}

\newtheorem{proA}{Proposition}
\newtheorem{lemA}[proA]{Lemma}
\newtheorem{thmA}[proA]{Theorem}
\newtheorem{deA}[proA]{Definition}
\newtheorem{coA}[proA]{Comment}
\newtheorem{noA}[proA]{Notation}
\newtheorem{vbA}[proA]{Example}
\newtheorem{vbnA}[proA]{Examples}
\newtheorem{gevA}[proA]{Corollary}
\newtheorem{vrgA}[proA]{Question}

\newtheorem{proB}{Proposition}
\newtheorem{lemB}[proB]{Lemma}
\newtheorem{thmB}[proB]{Theorem}
\newtheorem{deB}[proB]{Definition}
\newtheorem{coB}[proB]{Comment}
\newtheorem{noB}[proB]{Notation}
\newtheorem{vbB}[proB]{Example}
\newtheorem{vbnB}[proB]{Examples}
\newtheorem{gevB}[proB]{Corollary}
\newtheorem{vrgB}[proB]{Question}

\newcommand{\Lin}{\textrm{\upshape{Lin}}\left(\left\{\xi_{n,k}\right\}\right)}

\hyphenation{frame-work}
\hyphenation{dif-fe-rent}
\hyphenation{a-vai-la-ble}
\hyphenation{me-tric}
\hyphenation{to-po-lo-gi-cal}
\hyphenation{con-ti--nu-ous-ly}
\hyphenation{de-pen-ding}
\hyphenation{ne-gli-gi-ble}
\hyphenation{de-ri-va-tive}

\begin{abstract}
We use Stein's method to prove a generalization of the Lindeberg-Feller CLT providing an upper and a lower bound for the superior limit of the Kolmogorov distance between a normally distributed random variable and the rowwise sums of a rowwise independent triangular array of random variables which is asymptotically negligible in the sense of Feller. A natural example shows that the upper bound is of optimal order. The lower bound improves a result by Andrew Barbour and Peter Hall. 
\end{abstract}

\section{Introduction and motivation}

\paragraph{}
One of the most important questions in probability theory reads as follows:
\begin{vrg}\label{HeurQ}
Under which conditions is a large sum of independent random variables approximately normally distributed?
\end{vrg}
\paragraph{}
The most common way to address this question formally is within the framework of so-called triangular arrays of random variables, a concept which we now introduce.
\paragraph{}
By a \textit{standard triangular array} (STA)  we mean a triangular array of real square integrable random variables 
\begin{displaymath}
\begin{array}{cccc} 
\xi_{1,1} &  &  \\
\xi_{2,1} & \xi_{2,2} & \\
\xi_{3,1} & \xi_{3,2} & \xi_{3,3} \\
 & \vdots &
\end{array}
 \end{displaymath}
satisfying the following properties.
 \begin{eqnarray*}
 	&(a)& \forall n : \xi_{n,1}, \ldots, \xi_{n,n} \textrm{ are independent.}\\
	&(b)& \forall n, k : \mathbb{E}\left[\xi_{n,k}\right] = 0.\\
	&(c)& \forall n : \sum_{k=1}^{n} \sigma_{n,k}^2 = 1\textrm{, where } \sigma_{n,k}^2 = \mathbb{E}\left[\xi_{n,k}^2\right].
 \end{eqnarray*}
\paragraph{} Let $\left\{\xi_{n,k}\right\}$ be an STA and $\xi$ a standard normally distributed random variable. Question \ref{HeurQ} can now be put as follows:
\begin{vrg}\label{FormQ}
Under which conditions is the weak limit relation $\displaystyle{\sum_{k=1}^n \xi_{n,k} \stackrel{w}{\rightarrow} \xi}$ valid?
\end{vrg}
\paragraph{} It turns out that when $\left\{\xi_{n,k}\right\}$ satisfies \textit{Feller's negligibility condition} in the sense that
\begin{eqnarray}
\max_{k=1}^n \sigma_{n,k}^2 \rightarrow 0, \label{FelNegCond}
\end{eqnarray} 
the Lindeberg-Feller CLT (\cite{Fel}) provides us with a completely satisfactory answer to Question \ref{FormQ}:
\begin{thm}\upshape{(Lindeberg-Feller CLT)}\label{LFCLT}
\textit{Suppose that $\left\{\xi_{n,k}\right\}$ satisfies} \upshape{(\ref{FelNegCond})}. \textit{Then the following are equivalent}:
\begin{eqnarray}
	&(a)&\phantom{1}\sum_{k=1}^n \xi_{n,k} \stackrel{w}{\rightarrow} \xi.\nonumber \\
	&(b)& \forall \epsilon > 0 : \sum_{k=1}^n \mathbb{E}\left[\xi_{n,k}^2 ; \left|\xi_{n,k}\right| \geq \epsilon\right] \rightarrow 0.\label{LinCon}
\end{eqnarray}		
\upshape{(\ref{LinCon})} \textit{is often referred to as} \upshape{Lindeberg's condition}.
\end{thm}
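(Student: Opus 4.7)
My plan is to handle the two implications separately: use Stein's method for the sufficiency $(b) \Rightarrow (a)$, and fall back on characteristic functions for the necessity $(a) \Rightarrow (b)$, since Stein's method is not the natural instrument for extracting tail information from a qualitative weak limit.

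For $(b) \Rightarrow (a)$, fix a sufficiently smooth, bounded test function $h$ and let $f = f_{h}$ denote the solution of Stein's equation $f'(x) - xf(x) = h(x) - \mathbb{E}[h(\xi)]$. Write $S_n = \sum_{k=1}^n \xi_{n,k}$ and $S_n^{(k)} = S_n - \xi_{n,k}$. Using the independence of the $n$--th row together with $\mathbb{E}[\xi_{n,k}] = 0$ and $\sum_{k} \sigma_{n,k}^2 = 1$, I would rewrite
\[
\mathbb{E}\bigl[f'(S_n) - S_n f(S_n)\bigr] = \sum_{k=1}^{n} \sigma_{n,k}^2\,\mathbb{E}\bigl[f'(S_n) - f'(S_n^{(k)})\bigr] - \sum_{k=1}^{n} \mathbb{E}\bigl[\xi_{n,k}\bigl(f(S_n) - f(S_n^{(k)}) - \xi_{n,k} f'(S_n^{(k)})\bigr)\bigr].
\]
The first sum is of order $\|f''\|_{\infty}\max_{k}\sigma_{n,k}$ and so vanishes by (\ref{FelNegCond}). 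The second is a Taylor remainder, which I would split over $\{|\xi_{n,k}| < \epsilon\}$ and $\{|\xi_{n,k}| \geq \epsilon\}$: on the small set use the cubic bound $\|f''\|_{\infty}|\xi_{n,k}|^3/2 \leq \tfrac{\epsilon}{2}\|f''\|_{\infty}\xi_{n,k}^2$, on the tail use the cruder $2\|f'\|_{\infty}\xi_{n,k}^2$. Summing and invoking $\sum_{k}\sigma_{n,k}^2 = 1$ together with (\ref{LinCon}) leaves a total of order $\epsilon$ as $n\to\infty$; letting $\epsilon \downarrow 0$ afterwards yields $\mathbb{E}[h(S_n)] \to \mathbb{E}[h(\xi)]$ for a rich enough class of $h$, whence $(a)$.

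For $(a) \Rightarrow (b)$ I would set $\phi_{n,k}(t) = \mathbb{E}[e^{it\xi_{n,k}}]$. Feller's condition (\ref{FelNegCond}) combined with $|\phi_{n,k}(t) - 1| \leq \tfrac12 t^2 \sigma_{n,k}^2$ gives $\max_k|\phi_{n,k}(t) - 1| \to 0$, legitimising passage to logarithms in $\prod_k \phi_{n,k}(t) \to e^{-t^2/2}$. Taking real parts and using $1 - \cos(tx) = 2\sin^2(tx/2)$ reduces the situation to
\[
\sum_{k=1}^{n} \mathbb{E}\bigl[1 - \cos(t\xi_{n,k})\bigr] \to \tfrac12 t^2 .
\]
A final splitting of the integrand on $\{|\xi_{n,k}| < \epsilon\}$ and $\{|\xi_{n,k}| \geq \epsilon\}$, using $\sum_{k}\sigma_{n,k}^2 = 1$ on the former and letting $t$ be large on the latter, forces $\sum_{k} \mathbb{E}[\xi_{n,k}^2 ; |\xi_{n,k}| \geq \epsilon] \to 0$.

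The main obstacle is the bookkeeping in the Stein remainder: the cubic estimate $|\xi_{n,k}|^3$ is not summable under (\ref{LinCon}) alone, so the $\epsilon$--truncation above is unavoidable and the order of limits ($n\to\infty$ first, then $\epsilon\downarrow 0$) must be respected. A parallel delicacy haunts the converse, where the uniform smallness of $|\phi_{n,k}(t) - 1|$ supplied by (\ref{FelNegCond}) is what lets the logarithm expansion be performed without error terms contaminating the tail identity.
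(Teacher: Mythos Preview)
The paper does not give a self-contained proof of Theorem~\ref{LFCLT}; it is quoted from \cite{Fel} as background, and the equivalence is recovered only \emph{a posteriori} as the qualitative special case of the quantitative Theorems~\ref{MainThm} and~\ref{thmBHallIneqour} together with Proposition~\ref{thm:PropRelLin}. Your proposal is correct and, for the direction $(b)\Rightarrow(a)$, it is essentially the computation of Section~2.2: your decomposition of $\mathbb{E}[f'(S_n)-S_nf(S_n)]$ is exactly identity~(\ref{BHallIneq}) with $\delta_{n,k},\epsilon_{n,k}$ as in (\ref{defdelta})--(\ref{defepsilon}), and your $\epsilon$--truncated Taylor estimate is the paper's use of (\ref{Taylorf}).

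For $(a)\Rightarrow(b)$ the two routes genuinely diverge. You pass through characteristic functions: Feller's condition makes $\max_k|\phi_{n,k}(t)-1|$ small, the logarithm linearises, and comparing $\sum_k\mathbb{E}[\tfrac12 t^2\xi_{n,k}^2-(1-\cos t\xi_{n,k})]\to 0$ against the pointwise lower bound on $\{|\xi_{n,k}|\geq\epsilon\}$ (for one fixed $t\geq\pi/\epsilon$, not ``$t$ large'' as a limit) extracts the Lindeberg tails. The paper instead stays entirely within Stein's method: Appendix~A builds, following Barbour--Hall, a test function $g(x)=xf(x)$ from a symmetric measure $\mu$ and shows (Theorem~\ref{AbstractThmLowerBound}) that the relaxed Lindeberg index $\widetilde{\textrm{Lin}}_{\psi_{1/2}}$ is controlled by $\limsup_n K(\xi,S_n)$, which via (\ref{LCByRelLin}) forces Lindeberg's condition once $K\to 0$. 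Your argument is shorter and more elementary; the paper's buys an explicit constant and the quantitative inequality (\ref{BHallIneq2our}), which is the whole point of the article. Both are sound.
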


\paragraph{} Throughout the remainder of the paper, $\xi$ and $\{\xi_{n,k}\}$ will be as in Theorem \ref{LFCLT}.

\paragraph{} The Kolmogorov distance between random variables $\eta$ and $\eta^\prime$ is defined by
\begin{displaymath}
K\left(\eta,\eta^\prime\right) = \sup_{x \in \mathbb{R}} \left|\mathbb{P}[\eta \leq x] - \mathbb{P}[\eta^\prime \leq x]\right|.
\end{displaymath}
In general, $K$ is too strong to metrize weak convergence, but it is well known that if $\eta$ is continuously distributed, the following are equivalent for any sequence $\left(\eta_n\right)_n$:
\begin{eqnarray*}
 &(a)& \eta_n \stackrel{w}{\rightarrow} \eta.\\
 &(b)& \limsup_{n \rightarrow \infty} K(\eta,\eta_n) = 0.	
\end{eqnarray*}

\paragraph{} The previous observation reveals that the Lindeberg-Feller CLT in fact gives a necessary and sufficient condition for the number $\displaystyle{\limsup_{n \rightarrow \infty} K\left(\xi,\sum_{k=1}^n \xi_{n,k}\right)}$ to equal $0$. The theorem however does not answer the following question, which is slightly more general than Question \ref{FormQ}, but nevertheless important from both a theoretical and applied point of view:

\begin{vrg}\label{QFormQ}
Under which conditions is the number $\displaystyle{\limsup_{n \rightarrow \infty}K\left(\xi,\sum_{k=1}^n \xi_{n,k}\right)}$ small?
\end{vrg}

\paragraph{} In this paper we will perform what we call an `isometric study' in which we answer Question \ref{QFormQ}, by providing an upper and a lower bound for the number  $\displaystyle{\limsup_{n \rightarrow \infty} K\left(\xi,\sum_{k=1}^n \xi_{n,k}\right)}$, and which constitutes a generalization of the Lindeberg-Feller CLT. Stein's method (\cite{Barbour},\cite{Stein1},\cite{Stein2}) will turn out to be a powerful and indispensable tool for the elaboration of this program.

\section{An upper bound for $\displaystyle{\limsup_{n \rightarrow \infty} K\left(\xi,\sum_{k=1}^n \xi_{n,k}\right)}$}

\paragraph{} We start with Lemma \ref{EasyK}, which makes the task of finding an upper bound for $\displaystyle{\limsup_{n \rightarrow \infty} K\left(\xi,\sum_{k=1}^n \xi_{n,k}\right)}$ considerably more feasible. We let $\mathcal{H}$ stand for the collection of all strictly decreasing functions $h : \mathbb{R} \rightarrow \mathbb{R}$, with a bounded first and second derivative and a bounded and piecewise continuous third derivative, and for which $\displaystyle{\lim_{x \rightarrow -\infty} h(x) = 1}$ and $\displaystyle{\lim_{x \rightarrow \infty} h(x) = 0}$.

\begin{lem}\label{EasyK}
If $\eta$ is continuously distributed, then the formula 
\begin{eqnarray}
\limsup_{n \rightarrow \infty} K\left(\eta,\eta_n\right) = \sup_{h \in \mathcal{H}} \limsup_{n \rightarrow \infty} \left|\mathbb{E}\left[h(\eta) - h(\eta_n)\right]\right|\label{AppForm}
\end{eqnarray}
is valid for any sequence $\left(\eta_n\right)_n$.
\end{lem}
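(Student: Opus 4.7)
The plan is to establish the two inequalities comprising (\ref{AppForm}) separately. Throughout, write $F_\zeta(t) = \mathbb{P}[\zeta \le t]$ for brevity.

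\emph{Easy direction.} Fix $h \in \mathcal{H}$. Since $h$ is $C^1$ with $h(-\infty)=1$ and $h(+\infty)=0$, the function $-h'$ is a probability density. The identity $h(y)=\int_y^{\infty}(-h'(t))\,dt$ combined with Fubini's theorem yields $\mathbb{E}[h(\eta)]=\int_{\mathbb{R}}(-h'(t))F_\eta(t)\,dt$, and likewise for $\eta_n$, so
\begin{displaymath}
\bigl|\mathbb{E}[h(\eta)-h(\eta_n)]\bigr|
= \left|\int_{\mathbb{R}}(-h'(t))(F_\eta(t)-F_{\eta_n}(t))\,dt\right|
\le K(\eta,\eta_n).
\end{displaymath}
Taking $\limsup_n$ and then $\sup_h$ gives $\sup_{h\in\mathcal{H}}\limsup_n|\mathbb{E}[h(\eta)-h(\eta_n)]|\le\limsup_n K(\eta,\eta_n)$.

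\emph{Reverse direction.} Let $L=\limsup_n K(\eta,\eta_n)$ and fix $\delta\in(0,L)$. I pick a subsequence $(n_k)$ with $K(\eta,\eta_{n_k})\to L$ and points $x_k\in\mathbb{R}$ satisfying $|F_\eta(x_k)-F_{\eta_{n_k}}(x_k)|>L-\delta$. Using compactness in $[-\infty,+\infty]$ I extract a further subsequence along which $x_k\to x^*$ monotonically (or is eventually equal to $x^*$) and $F_{\eta_{n_k}}(x_k)\to v\in[0,1]$, with the sign of $F_\eta(x_k)-F_{\eta_{n_k}}(x_k)$ constant. The cases $x^*=\pm\infty$ are handled by far-shifted tests $1-\Phi(\cdot-N)$ with $|N|$ large, exploiting that $F_\eta$ has limits $0$ and $1$ at $\mp\infty$. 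In the main case $x^*\in\mathbb{R}$, continuity of $F_\eta$ gives $F_\eta(x_k)\to F_\eta(x^*)$ and hence $|F_\eta(x^*)-v|\ge L-\delta$. Depending on the sign and the direction from which $x_k\to x^*$, I select a location $y_0$ in $\{x^*-\epsilon,\,x^*,\,x^*+\epsilon\}$ and build $h_\epsilon\in\mathcal{H}$ whose derivative $-h_\epsilon'$ is a smooth nonnegative bump of mass $1-\alpha$ concentrated in a small one-sided neighbourhood of $y_0$, plus an $\alpha$-multiple of $\phi$ (the standard normal density) to guarantee strict decrease and the regularity demanded by $\mathcal{H}$. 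The integral representation from the easy direction, combined with the monotonicity of $F_{\eta_{n_k}}$ and the pointwise control $|F_{\eta_{n_k}}(y_0\pm\epsilon)-F_\eta(y_0\pm\epsilon)|\le K(\eta,\eta_{n_k})$, then forces $|\mathbb{E}[h_\epsilon(\eta)-h_\epsilon(\eta_{n_k})]|\ge L-\delta-O(\alpha)-O(\epsilon)$ for all sufficiently large $k$. Letting $\alpha,\epsilon,\delta\downarrow 0$ through appropriately chosen members of $\mathcal{H}$ yields $\sup_{h\in\mathcal{H}}\limsup_n|\mathbb{E}[h(\eta)-h(\eta_n)]|\ge L$.

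\emph{Main obstacle.} The crucial difficulty is that the maximising points $x_k$ depend on $n_k$, whereas a single $h\in\mathcal{H}$ must witness the $\limsup$. This is circumvented by the subsequence extraction to a cluster point $x^*$ and by exploiting the continuity of $F_\eta$ at $x^*$, so that $\mathbb{E}[h_\epsilon(\eta)]$ is essentially determined by $F_\eta(x^*)$ independently of $k$; the one-sided placement of $-h_\epsilon'$ near $x^*$ then lets the smooth integral detect the correct one-sided limit of $F_{\eta_n}$ at the jump-like loci where the Kolmogorov distance is typically attained. The requirement that members of $\mathcal{H}$ be strictly decreasing everywhere with bounded derivatives up to order three is accommodated by the small admixture of $\phi$.
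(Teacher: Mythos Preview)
Your easy direction matches the paper's (the paper phrases it via the substitution $t\mapsto h^{-1}t$, but it is the same integration against $-h'$). For the reverse inequality your strategy is sound, but it is genuinely different from the paper's and considerably more laborious. The paper does \emph{not} chase a subsequence: it uses the uniform continuity of $F_\eta$ to produce finitely many points $x_1<\cdots<x_N$ with
\[
K(\eta,\eta_n)\ \le\ \max_{k}\bigl|F_\eta(x_k)-F_{\eta_n}(x_k)\bigr|+\epsilon\qquad\text{for \emph{every} }n,
\]
then sandwiches each indicator $1_{(-\infty,x_k]}$ (again uniformly in $n$, using continuity of $F_\eta$ at $x_k$) by two members $h,\widetilde h\in\mathcal H$, yielding fixed test functions $h_1,\dots,h_{2N}\in\mathcal H$ with $K(\eta,\eta_n)\le\max_j|\mathbb E[h_j(\eta)-h_j(\eta_n)]|+2\epsilon$. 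The decisive step is then simply that $\limsup_n$ commutes with a \emph{finite} maximum. Your route instead passes to a subsequence, a cluster point $x^*$, and a tailored bump $h_\epsilon$; this works, but forces the case split (sign, monotone direction of $x_k\to x^*$, $x^*$ finite versus $\pm\infty$) and some care you only sketch --- in particular, the ``pointwise control $|F_{\eta_{n_k}}(y_0\pm\epsilon)-F_\eta(y_0\pm\epsilon)|\le K(\eta,\eta_{n_k})$'' is not really what drives the lower bound; what you actually need is monotonicity of $F_{\eta_{n_k}}$ together with the bump's support lying eventually on the correct side of $x_k$. The paper's argument is shorter and avoids compactness altogether; yours has the minor advantage of exhibiting a single near-optimal test function.
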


\begin{proof}
Let $\epsilon > 0$ be arbitrary. The continuity of $\eta$ allows us to construct points $x_1 < \cdots < x_N$ such that for each $n$
\begin{eqnarray}
K(\eta,\eta_n) \leq \max_{k=1}^N \left|\mathbb{P}\left[\eta \leq x_k\right] - \mathbb{P}\left[\eta_n \leq x_k\right]\right| + \epsilon.\label{KFP}
\end{eqnarray}
But, again invoking the continuity of $\eta$, it is also easily seen that for each $x \in \mathbb{R}$ there exists $\delta > 0$ and functions $h, \widetilde{h} \in \mathcal{H}$ such that for each $n$
\begin{eqnarray}
\lefteqn{\left|\mathbb{P}\left[\eta \leq x \right] - \mathbb{P}\left[\eta_n \leq x\right]\right|}\nonumber\\ 
&\leq& \max\left\{\mathbb{P}\left[\eta \leq x - \delta\right] - \mathbb{P}\left[\eta_n \leq x\right], \mathbb{P}\left[\eta_n \leq x\right] - \mathbb{P}[\eta \leq x + \delta]\right\} + \epsilon/2\nonumber\\
&\leq& \max \left\{ \mathbb{E}\left[h(\eta) - h(\eta_n)\right] , \mathbb{E}\left[\widetilde{h}(\eta_n) - \widetilde{h}(\eta)\right]\right\} + \epsilon.\label{FPF}
\end{eqnarray}
Combining (\ref{KFP}) and (\ref{FPF}) reveals that there exist functions $h_1, \ldots, h_{2N} \in \mathcal{H}$ such that 
\begin{eqnarray*}
\limsup_{n \rightarrow \infty} K(\eta,\eta_n) &\leq& \limsup_{n \rightarrow \infty} \max_{k=1}^{2N} \left|\mathbb{E}[h_k(\eta) - h_k(\eta_n)]\right| + 2 \epsilon\\
&=& \max_{k=1}^{2N} \limsup_{n \rightarrow \infty}  \left|\mathbb{E}[h_k(\eta) - h_k(\eta_n)]\right| + 2\epsilon\\
&\leq& \sup_{h \in \mathcal{H}} \limsup_{n \rightarrow \infty}  \left|\mathbb{E}[h(\eta) - h(\eta_n)]\right| + 2 \epsilon.
\end{eqnarray*}

Hence the left-hand side of (\ref{AppForm}) is dominated by the right-hand side. 

For the converse inequality, it suffices to remark that for any $h \in \mathcal{H}$
\begin{displaymath}
\left|\mathbb{E}[h(\eta) - h(\eta_n)]\right| \leq \int_0^1 \left|\mathbb{P}\left[\eta \leq h^{-1}t\right] - \mathbb{P}\left[\eta_n \leq h^{-1}t\right]\right| dt \leq K(\eta,\eta_n).
\end{displaymath} 
\end{proof}

\paragraph{} With the Lindeberg-Feller CLT in mind, it seems plausible that bounds for $\displaystyle{\limsup_{n \rightarrow \infty} K\left(\xi,\sum_{k=1}^n \xi_{n,k}\right)}$ should be based on Lindeberg's condition (\ref{LinCon}). Hence, a first naive guess leads  us to the definition of the number
\begin{eqnarray}
\Lin = \sup_{\epsilon > 0} \limsup_{n \rightarrow \infty} \sum_{k=1}^n \mathbb{E}\left[\xi_{n,k}^2;\left|\xi_{n,k}\right| \geq \epsilon\right]\label{LinInd}
\end{eqnarray}
which we call the \textit{Lindeberg index}. It is clear that $\left\{\xi_{n,k}\right\}$ satisfies Lindeberg's condition if and only if $\Lin = 0$. We will provide an example, inspired by one of the problems posed in \cite{Fel}, chapter XV, which illustrates that the Lindeberg index is not trivial in our context.

\paragraph{} Fix $0 < \alpha < 1$, let $\beta =  \frac{\alpha}{1 - \alpha}$\label{def:beta}
and put 
\begin{eqnarray}
s_n^2 = (1 + \beta) n - \beta \sum_{k=1}^n k^{-1} = n + \beta \sum_{k=1}^{n} \left(1 - k^{-1}\right).\label{def:sn}
\end{eqnarray}
Notice that $s_n^2 \rightarrow \infty$. Now consider the STA $\left\{\eta_{\alpha,n,k}\right\}$ such that 
\begin{eqnarray}
\mathbb{P}\left[\eta_{\alpha,n,k} = -1/s_n\right] = \mathbb{P}\left[\eta_{\alpha,n,k} = 1/s_n\right] = \frac{1}{2}\left(1 - \beta k^{-1}\right)\label{def:STAeta1}
\end{eqnarray}
and
\begin{eqnarray}
\mathbb{P}\left[\eta_{\alpha,n,k} = -\sqrt{k}/{s_n}\right] = \mathbb{P}\left[\eta_{\alpha,n,k} = \sqrt{k}/{s_n}\right] = \frac{1}{2}\beta k^{-1}.\label{def:STAeta2}
\end{eqnarray}

\begin{pro}\label{thm:ExplLin}
The STA $\left\{\eta_{\alpha,n,k}\right\}$ satisfies Feller's condition \upshape{(\ref{FelNegCond})} and 
\begin{eqnarray}
\textrm{\upshape{Lin}}\left(\left\{\eta_{\alpha,n,k}\right\}\right) = \alpha.\label{eq:ExplLin}
\end{eqnarray}
\end{pro}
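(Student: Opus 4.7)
The plan is to perform direct but careful computations using the explicit form of the distribution of $\eta_{\alpha,n,k}$, exploiting the symmetry and the tight control on $s_n^2$.

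First I would record two elementary facts that make the rest routine. By symmetry, $\mathbb{E}[\eta_{\alpha,n,k}] = 0$, and a direct computation gives
\begin{displaymath}
\sigma_{n,k}^2 = \frac{1}{s_n^2}(1 - \beta k^{-1}) + \frac{k}{s_n^2}\beta k^{-1} = \frac{1 + \beta(1 - k^{-1})}{s_n^2}.
\end{displaymath}
Summing over $k$ and comparing with the definition (\ref{def:sn}) yields $\sum_{k=1}^n \sigma_{n,k}^2 = 1$, confirming that $\{\eta_{\alpha,n,k}\}$ is an STA. Since $\sum_{k=1}^n k^{-1} = O(\log n)$ is negligible compared with $n$, the expression (\ref{def:sn}) gives the asymptotic relation $s_n^2 / n \to 1 + \beta = 1/(1-\alpha)$, so in particular $s_n^2 \to \infty$. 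Feller's condition (\ref{FelNegCond}) is then immediate from the uniform bound $\sigma_{n,k}^2 \leq (1+\beta)/s_n^2 \to 0$.

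Next I would turn to the Lindeberg index. The key observation is that $\eta_{\alpha,n,k}$ takes only the four values $\pm 1/s_n$ and $\pm \sqrt{k}/s_n$. Fix $\epsilon > 0$. Since $1/s_n \to 0$, for every sufficiently large $n$ the inequality $|\eta_{\alpha,n,k}| \geq \epsilon$ excludes the values $\pm 1/s_n$ and is equivalent to $\sqrt{k}/s_n \geq \epsilon$, i.e.\ $k \geq \epsilon^2 s_n^2$. Using (\ref{def:STAeta2}),
\begin{displaymath}
\sum_{k=1}^n \mathbb{E}\left[\eta_{\alpha,n,k}^2 ; \left|\eta_{\alpha,n,k}\right| \geq \epsilon\right] = \sum_{\lceil \epsilon^2 s_n^2 \rceil \leq k \leq n} \frac{k}{s_n^2} \cdot \beta k^{-1} = \frac{\beta}{s_n^2} \bigl(n - \lceil \epsilon^2 s_n^2 \rceil + 1\bigr)^{+}
\end{displaymath}
for all large $n$. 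Letting $n \to \infty$ and using $n/s_n^2 \to 1-\alpha$, together with $\beta(1-\alpha) = \alpha$, I obtain
\begin{displaymath}
\limsup_{n\to\infty}\sum_{k=1}^n \mathbb{E}\bigl[\eta_{\alpha,n,k}^2 ; |\eta_{\alpha,n,k}| \geq \epsilon\bigr] = \max\{\alpha - \beta\epsilon^2,\,0\}.
\end{displaymath}

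Finally I would take the supremum over $\epsilon > 0$. Because the right-hand side is decreasing in $\epsilon$ and equals $\alpha - \beta\epsilon^2$ for all sufficiently small $\epsilon$, the supremum is attained in the limit $\epsilon \downarrow 0$ and equals $\alpha$, which proves (\ref{eq:ExplLin}). There is no genuine obstacle here; the only delicate point is the bookkeeping in the asymptotic identity $n/s_n^2 \to 1-\alpha$, which is what produces the precise value $\alpha$ (rather than some related quantity like $\beta$) and which reflects the careful normalization built into the definition of $s_n$.
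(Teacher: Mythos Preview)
Your proof is correct and follows essentially the same route as the paper's: compute $\sigma_{n,k}^2$, use $s_n^2/n \to 1+\beta$ for Feller's condition, observe that for large $n$ only the atoms at $\pm\sqrt{k}/s_n$ can exceed $\epsilon$, and evaluate the resulting sum $\frac{\beta}{s_n^2}(n - \lceil \epsilon^2 s_n^2\rceil + 1)$ to get $\alpha - \beta\epsilon^2$. The only cosmetic difference is that you treat all $\epsilon>0$ at once via the positive part and the $\max\{\cdot,0\}$, whereas the paper restricts from the outset to small $\epsilon$ with $\epsilon^2(1+\beta)\leq 1$; both lead to the same supremum $\alpha$.
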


\begin{proof}
For Feller's negligibility condition we observe
\begin{displaymath}
\max_{k=1}^{n} \mathbb{E}\left[\eta_{\alpha,n,k}^2\right] = \frac{(1 + \beta) - \beta n^{-1} }{s_n^2} \rightarrow 0.
\end{displaymath}
\paragraph{} In order to calculate $\textrm{\upshape{Lin}}\left(\left\{\eta_{\alpha,n,k}\right\}\right)$, we fix $\epsilon > 0$ so that $\epsilon^2 (1 + \beta) \leq 1$ and hence $\epsilon^2 s_n^2 \leq n$. Then for $n$ so that $\epsilon s_n > 1$ and $k \leq n$ we have
\begin{displaymath}
\mathbb{E}\left[\eta_{\alpha,n,k}^2 ; \left|\eta_{\alpha,n,k}\right| \geq \epsilon\right] = \left\{\begin{array}{clrr}      
\beta/s_n^2 & \textrm{ if }& k \geq\epsilon^2 s_n^2  \\       
0 & \textrm{ if } & k < \epsilon^2 s_n^2
  \end{array}\right..
\end{displaymath}
It follows that
\begin{eqnarray*}
\lefteqn{\limsup_{n \rightarrow \infty}  \sum_{k=1}^n \mathbb{E}\left[\eta_{\alpha,n,k}^2 ; \left|\eta_{\alpha,n,k}\right| \geq \epsilon \right]}\\
&=& \limsup_{n \rightarrow \infty} \frac{\beta (n - \left\lceil\epsilon^2 s_n^2\right\rceil + 1)}{s_n^2}\\
&=& \limsup_{n \rightarrow \infty} \frac{\beta n}{(1+\beta)n - \beta \sum_{k=1}^n k^{-1}} - \beta \limsup_{n \rightarrow \infty} \frac{\left\lceil \epsilon^2 s_n^2\right\rceil - 1}{s_n^2}\\
&=& \limsup_{n \rightarrow \infty} \frac{\beta}{(1 + \beta) - \beta \frac{1}{n} \sum_{k=1}^n k^{-1}} - \beta \epsilon^2\\
&=&\frac{\beta}{1+\beta} - \beta \epsilon^2\\
&=& \alpha - \beta \epsilon^2
\end{eqnarray*}
which proves the proposition.
\end{proof}

\paragraph{} We now embark on the search for an upper bound for  $\displaystyle{\limsup_{n \rightarrow \infty} K\left(\xi,\sum_{k=1}^n \xi_{n,k}\right)}$ in terms of $\Lin$. We will discuss and compare two different methods, which are both known to lead to a proof of the sufficiency of Lindeberg's condition for normal convergence in the Lindeberg-Feller CLT.

\subsection{The classical method}

\paragraph{} The `classical method' to prove normal convergence appears in many different forms such as e.g. characteristic functions (\cite{Kal}) or Gaussian transforms (\cite{Berg}), but it always involves estimating an expression of the type
\begin{eqnarray}
\left|\mathbb{E}\left[h\left(\xi\right) - h\left(\sum_{k=1}^{n}\xi_{n,k}\right) \right]\right|\label{ExprToEst}
\end{eqnarray}
based on the following three key observations. Their proofs are elementary and can be found in the literature.
\begin{itemize}
 \item[(I)] The random variable $\xi$ is infinitely divisible in the sense that for each $n$
 \begin{eqnarray}
 \xi = \sum_{k=1}^n \eta_{n,k}\label{infdiveq}
 \end{eqnarray}
 where $\{\eta_{n,k}\}$ is the STA consisting of normally distributed random variables with $\mathbb{E}\left[\eta_{n,k}^2\right] = \sigma_{n,k}^2$.
\item[(II)] Let $\{\eta_{n,k}\}$ be as in (I). Then, for any bounded and continuous function $h : \mathbb{R} \rightarrow \mathbb{R}$,
\begin{eqnarray}
\left|\mathbb{E}\left[h\left(\sum_{k=1}^n \eta_{n,k}\right) - h\left(\sum_{k=1}^n \xi_{n,k}\right)\right]\right|
\leq \sum_{k=1}^n \sup_{a \in \mathbb{R}}\left|\mathbb{E}[h\left(a + \eta_{n,k}\right)-h(a + \xi_{n,k})]\right|.\label{stability}
\end{eqnarray}
\item[(III)] Let $h : \mathbb{R} \rightarrow \mathbb{R}$ have a bounded second derivative and a bounded and piecewise continuous third derivative. Then for any $a, x \in \mathbb{R}$
\begin{eqnarray}
\left|h(a + x) - h(a) - h^\prime(a) x  - \frac{1}{2} h^{\prime \prime}(a) x^2\right|
\leq \min \left\{\left\|h^{\prime \prime}\right\|_{\infty} x^2, \frac{1}{6} \left\|h^{\prime \prime \prime}\right\|_\infty \left|x\right|^3\right\}.\label{Taylor}
\end{eqnarray}
\end{itemize}

\paragraph{} Combining (\ref{infdiveq}), (\ref{stability}) and (\ref{Taylor}) for our purpose, yields for any $h \in \mathcal{H}$ and $\epsilon > 0$
\begin{eqnarray*}
\lefteqn{\left|\mathbb{E}\left[h(\xi) - h\left(\sum_{k=1}^n \xi_{n,k}\right)\right]\right|}\\
&=& \left|\mathbb{E}\left[h\left(\sum_{k=1}^n \eta_{n,k}\right) - h\left(\sum_{k=1}^n \xi_{n,k}\right)\right]\right|\\
&\leq& \sum_{k=1}^n \sup_{a \in \mathbb{R}} \left|\mathbb{E}\left[h(a + \eta_{n,k}) - h(a + \xi_{n,k})\right] \right|\\
&\leq& \sum_{k=1}^n \sup_{a \in \mathbb{R}} \mathbb{E}\left[\left|h(a + \eta_{n,k}) - h(a) -  h^\prime(a) \eta_{n,k} - \frac{1}{2} h^{\prime \prime}(a) \eta_{n,k}^2\right|\right]\\
&&+\sum_{k=1}^n \sup_{a \in \mathbb{R}} \mathbb{E}\left[\left|h(a + \xi_{n,k}) - h(a) - h^\prime(a) \xi_{n,k} - \frac{1}{2} h^{\prime \prime}(a) \xi_{n,k}^2\right|\right]\\
&\leq& \frac{1}{6} \left\|h^{\prime \prime \prime}\right\|_\infty \sum_{k=1}^n \mathbb{E}\left[\left|\eta_{n,k}\right|^3\right]  + \frac{1}{6} \left\|h^{\prime \prime \prime}\right\|_{\infty} \sum_{k=1}^n \mathbb{E}\left[\left|\xi_{n,k}\right|^3;\left|\xi_{n,k}\right| < \epsilon\right]\\
&&+ \left\|h^{\prime \prime}\right\|_\infty \sum_{k=1}^n \mathbb{E}\left[\xi_{n,k}^2 ; \left|\xi_{n,k}\right| \geq \epsilon\right] \\
&\leq& \frac{1}{6} \left\|h^{\prime \prime \prime}\right\|_\infty \left( \mathbb{E}\left[\left|\xi\right|^3\right] \max_{k=1}^{n} \sigma_{n,k} + \epsilon \right) + \left\|h^{\prime \prime}\right\|_\infty \sum_{k=1}^n \mathbb{E}\left[\xi_{n,k}^2 ; \left|\xi_{n,k}\right| \geq \epsilon\right]. 
\end{eqnarray*}
Because of (\ref{FelNegCond}), the previous calculation shows that for any $h \in \mathcal{H}$
\begin{eqnarray}
\limsup_{n \rightarrow \infty} \left|\mathbb{E}\left[h(\xi) - h\left(\sum_{k=1}^n \xi_{n,k}\right)\right]\right| \leq \left\|h^{\prime \prime}\right\|_\infty \Lin\label{ClassResult}.
\end{eqnarray}
\paragraph{} Recalling (\ref{AppForm}), we see that (\ref{ClassResult}) proves that Lindeberg's condition is sufficient for the weak limit relation $\displaystyle{\sum_{k=1}^n \xi_{n,k} \stackrel{w}{\rightarrow} \xi}$ to hold. However, since $\left\|h^{\prime \prime}\right\|_\infty$ blows up if we let $h$ run through the collection $\mathcal{H}$, (\ref{ClassResult}) is useless for the derivation of an upper bound for $\displaystyle{\limsup_{n \rightarrow \infty} K\left(\xi,\sum_{k=1}^n \xi_{n,k}\right)}$. We conclude that although the classical method suffices to decide when the number $\displaystyle{\limsup_{n \rightarrow \infty} K\left(\xi,\sum_{k=1}^n \xi_{n,k}\right)}$ is $0$, it is not subtle enough to decide when it is small.

\subsection{Stein's method}

\paragraph{} Whereas the classical method provides an upper bound for (\ref{ExprToEst}) based on a direct analysis of the function $h$, Stein's method first transforms (\ref{ExprToEst}) to an expression which is easier to analyze. The basics of the method are contained in the following lemma. The proofs can be found in e.g. \cite{Barbour}.

\begin{lem}\label{SteinBasics}
Let $h : \mathbb{R} \rightarrow \mathbb{R}$ be measurable and bounded. Put
\begin{eqnarray}
f_h(x) = e^{x^2/2} \int_{-\infty}^x \left(h(t) - \mathbb{E}[h(\xi)]\right) e^{-t^2/2} dt \label{SteinSolution}.
\end{eqnarray}
Then for any $x \in \mathbb{R}$
\begin{eqnarray}
\mathbb{E}\left[h(\xi)\right] - h(x)= x f_h(x) - f_h^\prime (x).\label{SteinIdentity}
\end{eqnarray}
Moreover, if $h$ is absolutely continuous, then
\begin{eqnarray}
\left\|f_h^{\prime \prime}\right\|_\infty \leq 2\left\|h^\prime\right\|_\infty\label{fhbounded},
\end{eqnarray}
and if $h_z = 1_{\left]-\infty,z\right]}$ for $z \in \mathbb{R}$, then for all $x,y \in \mathbb{R}$
\begin{eqnarray}
\left|f^\prime_{h_z}(x) - f^\prime_{h_z} (y)\right| \leq 1.\label{SteinUpperBound}
\end{eqnarray}
\end{lem}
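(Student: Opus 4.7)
My plan is to prove the three assertions in the order they are stated, since each relies on the previous.

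For the Stein identity (\ref{SteinIdentity}), I would simply differentiate $f_h$ directly. Writing $f_h(x) = e^{x^2/2} g(x)$ with $g(x) = \int_{-\infty}^x (h(t)-\mathbb{E}[h(\xi)]) e^{-t^2/2}\, dt$, the product rule and the fundamental theorem of calculus give
\begin{displaymath}
f_h'(x) = x e^{x^2/2} g(x) + e^{x^2/2}(h(x) - \mathbb{E}[h(\xi)]) e^{-x^2/2} = x f_h(x) + h(x) - \mathbb{E}[h(\xi)],
\end{displaymath}
which rearranges to (\ref{SteinIdentity}). Before doing this I would note that $\int_{-\infty}^\infty (h(t)-\mathbb{E}[h(\xi)])e^{-t^2/2}\,dt = 0$, so that $f_h$ admits the equivalent representation $f_h(x) = -e^{x^2/2}\int_x^\infty(h(t) - \mathbb{E}[h(\xi)]) e^{-t^2/2}\,dt$; this dual representation is the key tool for controlling the exponential blow-up in what follows.

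For (\ref{fhbounded}), I would differentiate the Stein identity once more. Since $h$ is absolutely continuous,
\begin{displaymath}
f_h''(x) = f_h(x) + x f_h'(x) + h'(x) = (1+x^2) f_h(x) + x(h(x) - \mathbb{E}[h(\xi)]) + h'(x).
\end{displaymath}
The plan is to massage the first two terms so that the constant $h$ part cancels and only an integral involving $h'$ survives. Concretely, writing $h(x)-\mathbb{E}[h(\xi)] = \int (h(x)-h(s))\phi(s)\,ds = \int \int_s^x h'(u)\,du\,\phi(s)\,ds$ and substituting this into the two expressions for $f_h$ (from $-\infty$ to $x$ on one side, $x$ to $\infty$ on the other), then interchanging the order of integration by Fubini, one obtains an exact formula for $f_h''(x)$ of the form $\int K(x,u) h'(u)\,du$ where the kernel $K(x,u)$ is explicit in terms of $\phi,\Phi$ and the $e^{x^2/2}$ weights. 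The conclusion then reduces to the Mill's-ratio type estimate $\int |K(x,u)|\,du \leq 2$, uniformly in $x$.

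For the Lipschitz-style bound (\ref{SteinUpperBound}), I would exploit the fact that $h_z$ is explicit, so $f_{h_z}$ is explicit as well: for $x \leq z$ one has $f_{h_z}(x) = \sqrt{2\pi}\, e^{x^2/2}\Phi(x)(1-\Phi(z))$, and for $x > z$ one has $f_{h_z}(x) = \sqrt{2\pi}\, e^{x^2/2}\Phi(z)(1-\Phi(x))$. From these formulas, $f'_{h_z}(x)$ can be written down in closed form, and then $f'_{h_z}(x)-f'_{h_z}(y)$ is estimated by separating the cases according to the relative position of $x,y,z$ and using monotonicity of the explicit functions involved.

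The genuinely delicate step is (\ref{fhbounded}): the bound $2\|h'\|_\infty$ is sharp and the expression $(1+x^2)f_h(x)$ is unbounded in $x$ individually, so one cannot estimate termwise. The whole game is to organize the Fubini interchange and the Mill's ratio estimate so that the exponential factors cancel against the tails of $\phi$ and produce an absolutely integrable kernel with $L^1$ norm at most $2$.
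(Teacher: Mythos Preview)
The paper does not actually prove Lemma \ref{SteinBasics}; it simply records the statement and defers the proof to the literature with the sentence ``The proofs can be found in e.g.\ \cite{Barbour}.'' There is therefore nothing in the paper to compare your argument against.

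That said, your sketch is the standard route one finds in the Stein's method references the paper cites. The derivation of (\ref{SteinIdentity}) by direct differentiation is correct. For (\ref{fhbounded}), your plan to rewrite $f_h''$ as $\int K(x,u)\,h'(u)\,du$ and then bound $\int |K(x,u)|\,du\le 2$ via Mill's-ratio estimates is exactly the argument in the Barbour--Chen and Chen--Goldstein--Shao treatments; you are right that this is the delicate step and that termwise estimation of $(1+x^2)f_h(x)$ fails. For (\ref{SteinUpperBound}), the explicit piecewise formula for $f_{h_z}$ in terms of $\Phi$ that you wrote down is correct, and the case analysis you describe is again the textbook proof. So your proposal is sound and essentially reproduces the reference the paper points to.
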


\paragraph{} We will try to apply Lemma \ref{SteinBasics} in order to derive an upper bound for $\displaystyle{\limsup_{n \rightarrow \infty} K\left(\xi,\sum_{k=1}^n \xi_{n,k}\right)}$. We first need three additional lemmata.

\paragraph{} Stein's method was used by Barbour and Hall to derive Berry-Esseen type bounds in \cite{BHall}. The following lemma is inspired by their paper.

\begin{lem}
Let $h \in \mathcal{H}$ and put 
\begin{eqnarray}
\delta_{n,k} = f_h\left(\sum_{i \neq k} \xi_{n,i} + \xi_{n,k}\right) - f_{h}\left(\sum_{i \neq k} \xi_{n,i}\right) - \xi_{n,k} f^\prime_h\left(\sum_{i \neq k} \xi_{n,i}\right)\label{defdelta}
\end{eqnarray}
and 
\begin{eqnarray}
\epsilon_{n,k} = f^\prime_h\left(\sum_{i \neq k} \xi_{n,i} + \xi_{n,k}\right) - f^\prime_{h}\left(\sum_{i \neq k} \xi_{n,i}\right) - \xi_{n,k} f^{\prime \prime}_h\left(\sum_{i \neq k}\xi_{n,i}\right).\label{defepsilon}
\end{eqnarray}
Then
\begin{eqnarray}
\lefteqn{\mathbb{E}\left[\left(\sum_{k=1}^n \xi_{n,k}\right) f_h\left(\sum_{k=1}^n \xi_{n,k}\right) - f_h^\prime\left(\sum_{k=1}^n \xi_{n,k}\right)\right]}\nonumber\\
&& =\sum_{k=1}^n \mathbb{E}\left[\xi_{n,k}\delta_{n,k}\right] - \sum_{k=1}^n \sigma_{n,k}^2 \mathbb{E}\left[\epsilon_{n,k}\right].\label{BHallIneq}
\end{eqnarray}
\end{lem}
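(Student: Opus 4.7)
The proof is fundamentally an exercise in rearranging the defining identities of $\delta_{n,k}$ and $\epsilon_{n,k}$ and then exploiting independence together with the normalization $\sum_{k=1}^n \sigma_{n,k}^2 = 1$. Let me write $S_n = \sum_{k=1}^n \xi_{n,k}$ and $S_n^{(k)} = S_n - \xi_{n,k} = \sum_{i \neq k} \xi_{n,i}$. The plan is to treat the two terms $\mathbb{E}[S_n f_h(S_n)]$ and $\mathbb{E}[f_h'(S_n)]$ of the left-hand side separately, write each as a sum of $n$ terms indexed by $k$, substitute the defining expressions of $\delta_{n,k}$ and $\epsilon_{n,k}$, and then watch a convenient cancellation produce the right-hand side.

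First I would handle $\mathbb{E}[S_n f_h(S_n)] = \sum_{k=1}^n \mathbb{E}[\xi_{n,k} f_h(S_n)]$. The definition (\ref{defdelta}) rearranges to
\begin{equation*}
f_h(S_n) = f_h(S_n^{(k)}) + \xi_{n,k}\, f_h'(S_n^{(k)}) + \delta_{n,k},
\end{equation*}
so multiplying by $\xi_{n,k}$ and taking expectations, and then invoking the independence of $\xi_{n,k}$ and $S_n^{(k)}$ together with $\mathbb{E}[\xi_{n,k}] = 0$ and $\mathbb{E}[\xi_{n,k}^2] = \sigma_{n,k}^2$, the first term vanishes while the second produces $\sigma_{n,k}^2\, \mathbb{E}[f_h'(S_n^{(k)})]$. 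Summing gives
\begin{equation*}
\mathbb{E}[S_n f_h(S_n)] = \sum_{k=1}^n \sigma_{n,k}^2\, \mathbb{E}\left[f_h'(S_n^{(k)})\right] + \sum_{k=1}^n \mathbb{E}\left[\xi_{n,k}\, \delta_{n,k}\right].
\end{equation*}

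Next, I would use $\sum_{k=1}^n \sigma_{n,k}^2 = 1$ to write $\mathbb{E}[f_h'(S_n)] = \sum_{k=1}^n \sigma_{n,k}^2\, \mathbb{E}[f_h'(S_n)]$. The definition (\ref{defepsilon}) rearranges to
\begin{equation*}
f_h'(S_n) = f_h'(S_n^{(k)}) + \xi_{n,k}\, f_h''(S_n^{(k)}) + \epsilon_{n,k},
\end{equation*}
and taking expectations, the middle term again vanishes by independence of $\xi_{n,k}$ and $S_n^{(k)}$ combined with $\mathbb{E}[\xi_{n,k}] = 0$. Thus
\begin{equation*}
\mathbb{E}[f_h'(S_n)] = \sum_{k=1}^n \sigma_{n,k}^2\, \mathbb{E}\left[f_h'(S_n^{(k)})\right] + \sum_{k=1}^n \sigma_{n,k}^2\, \mathbb{E}\left[\epsilon_{n,k}\right].
\end{equation*}
Subtracting the two displays cancels the common $\sum_k \sigma_{n,k}^2 \mathbb{E}[f_h'(S_n^{(k)})]$ and yields precisely (\ref{BHallIneq}).

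There is no real obstacle: the argument is pure algebra, and the only substantive inputs are the rowwise independence (properties (a)--(c) of an STA) and the normalization $\sum_{k=1}^n \sigma_{n,k}^2 = 1$. One small bookkeeping point worth watching is the integrability of the various expressions (so that Fubini/linearity of expectation are justified), but this follows from boundedness of $f_h$, $f_h'$ and $f_h''$ for $h \in \mathcal{H}$ via the Stein lemma and the square integrability of the $\xi_{n,k}$.
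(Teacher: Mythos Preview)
Your argument is correct and is exactly the computation the paper has in mind: its proof simply says to expand the right-hand side using independence of $\xi_{n,k}$ and $\sum_{i\neq k}\xi_{n,i}$, $\mathbb{E}[\xi_{n,k}]=0$, and $\sum_{k}\sigma_{n,k}^2=1$, which is precisely what you carry out (starting from the left-hand side rather than the right, a cosmetic difference). Your remark on integrability is a reasonable extra note but not something the paper bothers to spell out.
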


\begin{proof}
Calculate the right-hand side of (\ref{BHallIneq}) and recall that $\xi_{n,k}$ and $\sum_{i \neq k} \xi_{n,i}$ are independent, $\mathbb{E}\left[\xi_{n,k}\right] = 0$ and $\sum_{k=1}^n \sigma_{n,k}^2 = 1$.
\end{proof}

\paragraph{} The following lemma is a straightforward application of Taylor's theorem.

\begin{lem}
Let $f : \mathbb{R} \rightarrow \mathbb{R}$ have a bounded derivative and a bounded and piecewise continuous second derivative. Then for any $a, x \in \mathbb{R}$
\begin{eqnarray}
\lefteqn{\left|f(a + x) - f(a) - f^\prime(a) x \right|}\nonumber\\ 
&&\leq \min \left\{\left(\sup_{x_1,x_2 \in \mathbb{R}}\left|f^\prime(x_1) - f^{\prime}(x_2)\right|\right) \left|x\right|,\frac{1}{2} \left\|f^{\prime \prime}\right\|_\infty x^2\right\}.\label{Taylorf}
\end{eqnarray}
\end{lem}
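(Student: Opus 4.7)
My plan is to reduce the inequality to a single integral representation of the error and then dominate the integrand in two different ways. Since $f'$ is Lipschitz (its derivative $f''$ is bounded where defined and piecewise continuous), the fundamental theorem of calculus together with the substitution $u = sx$ gives
\begin{equation*}
f(a+x) - f(a) - f'(a)\, x \;=\; x \int_0^1 \bigl(f'(a+sx) - f'(a)\bigr)\, ds,
\end{equation*}
after subtracting $f'(a)\,x = x \int_0^1 f'(a)\,ds$. Taking absolute values under the integral and bounding the integrand in two ways will produce the two factors inside the $\min$ in (\ref{Taylorf}).

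For the first factor, I would simply note that $\left|f'(a+sx) - f'(a)\right| \leq \sup_{x_1,x_2 \in \mathbb{R}} \left|f'(x_1) - f'(x_2)\right|$ uniformly in $s \in [0,1]$, pull this supremum out of the integral, and multiply by the leading $|x|$.

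For the second factor, I would invoke the absolute continuity of $f'$ to write $f'(a+sx) - f'(a) = \int_0^{sx} f''(a+u)\,du$ and bound the integrand in modulus by $\|f''\|_\infty$, giving $\left|f'(a+sx) - f'(a)\right| \leq s |x|\, \|f''\|_\infty$. Plugging this into the identity and computing $\int_0^1 s\, ds = 1/2$ yields $\tfrac{1}{2}\|f''\|_\infty x^2$. The only mild subtlety is the piecewise continuity (rather than full continuity) of $f''$: on any bounded interval it has at most finitely many discontinuities, so one splits the integral $\int_0^{sx} f''(a+u)\,du$ into finitely many pieces on each of which the classical fundamental theorem of calculus applies, and adds the telescoping differences to recover $f'(a+sx) - f'(a)$ exactly. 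I anticipate no real obstacle beyond this bookkeeping.
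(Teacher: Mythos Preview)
Your argument is correct and is precisely the ``straightforward application of Taylor's theorem'' that the paper invokes without further detail: the integral form of the first-order remainder, bounded once by the oscillation of $f'$ and once via the mean value inequality for $f'$ using $\|f''\|_\infty$. The handling of the piecewise continuity of $f''$ is fine and indeed the only point requiring any care.
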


\paragraph{} We finally observe that the favorable inequality (\ref{SteinUpperBound}) is extendable to all functions in the collection $\mathcal{H}$:

\begin{lem}
Let $h \in \mathcal{H}$. Then for all $x,y \in \mathbb{R}$
\begin{eqnarray}
\left|f^\prime_h(x) - f^\prime_h(y)\right| \leq 1.\label{DiffFh}
\end{eqnarray}
\end{lem}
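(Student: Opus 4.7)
The strategy is to represent an arbitrary $h \in \mathcal{H}$ as a probability mixture of the indicator functions $h_u = 1_{]-\infty,u]}$ and then to exploit the linearity of the Stein solution operator $h \mapsto f_h$ defined by (\ref{SteinSolution}) in order to reduce the claim to the already-known inequality (\ref{SteinUpperBound}). Since $h \in \mathcal{H}$ is strictly decreasing with $h(-\infty)=1$ and $h(\infty)=0$, the function $-h'$ is a (bounded) probability density on $\mathbb{R}$, and one checks by the fundamental theorem of calculus that
\begin{eqnarray*}
h(t) = \int_{-\infty}^\infty \bigl(-h'(u)\bigr)\, h_u(t)\, du, \qquad t \in \mathbb{R}.
\end{eqnarray*}

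Plugging this identity into (\ref{SteinSolution}) and swapping the order of integration, I would obtain the mixture representation
\begin{eqnarray*}
f_h(x) = \int_{-\infty}^\infty \bigl(-h'(u)\bigr)\, f_{h_u}(x)\, du.
\end{eqnarray*}
The use of Fubini's theorem here is legitimate because $-h'$ is a probability density and the inner integrand $(h_u(t)-\mathbb{E}[h_u(\xi)])\, e^{-t^2/2}$ is uniformly bounded in absolute value by $e^{-t^2/2}$, which is integrable on $(-\infty,x]$. To differentiate this identity I would avoid interchanging differentiation with the integral directly; instead, I would use the Stein identity (\ref{SteinIdentity}), applied to $h$ and to each $h_u$, to rewrite $f_h'(x) = x f_h(x) + h(x) - \mathbb{E}[h(\xi)]$ and $f_{h_u}'(x) = x f_{h_u}(x) + h_u(x) - \mathbb{E}[h_u(\xi)]$. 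Combining these with the mixture representations of $h$ and of $f_h$ yields
\begin{eqnarray*}
f_h'(x) = \int_{-\infty}^\infty \bigl(-h'(u)\bigr) f_{h_u}'(x)\, du.
\end{eqnarray*}

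With this representation in hand, the conclusion is immediate: for any $x,y \in \mathbb{R}$,
\begin{eqnarray*}
\left|f_h'(x) - f_h'(y)\right| \leq \int_{-\infty}^\infty \bigl(-h'(u)\bigr) \left|f_{h_u}'(x) - f_{h_u}'(y)\right| du \leq \int_{-\infty}^\infty \bigl(-h'(u)\bigr)\, du = 1,
\end{eqnarray*}
where the middle inequality invokes (\ref{SteinUpperBound}) pointwise in $u$. The main technical obstacle is justifying the two applications of Fubini and handling the pointwise identity $f_{h_u}'(x) = x f_{h_u}(x) + h_u(x) - \mathbb{E}[h_u(\xi)]$ at the discontinuity $x = u$; this is harmless because a single $u$ has measure zero under the density $-h'$, so modifying $f_{h_u}'$ at that point does not affect the integral.
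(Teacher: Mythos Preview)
Your proposal is correct and follows essentially the same approach as the paper. The paper writes the mixture as $h(x)=\int_0^1 h_{h^{-1}(s)}(x)\,ds$, which is your representation $h(t)=\int(-h'(u))\,h_u(t)\,du$ after the change of variables $s=h(u)$, and then applies Fubini to the explicit formula for $f_h'$ to conclude via (\ref{SteinUpperBound}) exactly as you do.
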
 

\begin{proof}
From (\ref{SteinSolution}) we derive that
\begin{eqnarray}
f_h^\prime(x) = x e^{x^2/2} \int_{-\infty}^x \left(h(t) - \mathbb{E}[h(\xi)]\right) e^{-t^2/2} dt + h(x) - \mathbb{E}[h(\xi)].\label{fhRep}
\end{eqnarray}
Furthermore, for all $h \in \mathcal{H}$ we have
\begin{eqnarray}
h(x) = \int_0^1 h_{h^{-1} s}(x) ds.\label{hRep}
\end{eqnarray}
Combining (\ref{fhRep}) and (\ref{hRep}) and applying Fubini yields 
\begin{eqnarray}
f^\prime_h(x) - f^\prime_h(y) = \int_0^1 \left[f^\prime_{h^{-1}s}(x) - f^\prime_{h^{-1}s}(y)\right] ds\label{DiffRep} 
\end{eqnarray}
and the lemma follows from (\ref{SteinUpperBound}).
\end{proof}

\paragraph{} Combining (\ref{SteinIdentity}), (\ref{fhbounded}), (\ref{BHallIneq}), (\ref{Taylorf}) and (\ref{DiffFh}) yields for $h \in \mathcal{H}$ and $\epsilon > 0$
\begin{eqnarray*}
\lefteqn{\left|\mathbb{E}\left[h\left(\xi\right) - h\left(\sum_{k=1}^{n}\xi_{n,k}\right) \right]\right|}\\
 &=& \left|\mathbb{E}\left[\left(\sum_{k=1}^n \xi_{n,k}\right) f_h\left(\sum_{k=1}^n \xi_{n,k}\right) - f_h^\prime\left(\sum_{k=1}^n \xi_{n,k}\right)\right]\right|\\
&\leq&  \sum_{k=1}^n \mathbb{E}\left[\left|\xi_{n,k}\delta_{n,k}\right|\right] + \sum_{k=1}^n \sigma_{n,k}^2 \mathbb{E}\left[\left|\epsilon_{n,k}\right|\right]\\
&=& \frac{1}{2}\left\|f_h^{\prime \prime}\right\|_\infty \sum_{k=1}^n \mathbb{E}\left[\left|\xi_{n,k}\right|^3 ; \left|\xi_{n,k}\right|<\epsilon\right]\\
&& + \left(\sup_{x_1,x_2 \in \mathbb{R}} \left|f_h^\prime(x_1) - f_{h}^\prime(x_2)\right|\right) \sum_{k=1}^n \mathbb{E}\left[\left|\xi_{n,k}\right|^2;\left|\xi_{n,k}\right|\geq \epsilon\right]\\
&& + \left(\sup_{x_1, x_2 \in \mathbb{R}} \left|f_h^{\prime \prime}(x_1) - f_h^{\prime \prime} (x_2)\right| \right)\sum_{k=1}^n\sigma_{n,k}^2 \mathbb{E}\left[\left|\xi_{n,k}\right|\right]\\
&\leq& \frac{1}{2} \left\|f_h^{\prime \prime}\right\|_\infty \epsilon +  \sum_{k=1}^n \mathbb{E}\left[\left|\xi_{n,k}\right|^2;\left|\xi_{n,k}\right|\geq \epsilon\right] \\
&&+ \left(\sup_{x_1,x_2 \in \mathbb{R}} \left|f_h^{\prime \prime}(x_1) - f_h^{\prime \prime}(x_2)\right|\right) \max_{k=1}^n \sigma_{n,k}.
\end{eqnarray*}

\paragraph{} Because of (\ref{FelNegCond}), the previous calculation shows that for any $h \in \mathcal{H}$
\begin{eqnarray}
\limsup_{n \rightarrow \infty} \left|\mathbb{E}\left[h(\xi) - h\left(\sum_{k=1}^n \xi_{n,k}\right)\right]\right| \leq \Lin\label{SteinResult}.
\end{eqnarray}
\paragraph{} Recalling (\ref{AppForm}), we see that (\ref{SteinResult}) entails the following beautiful generalization of the sufficiency of Lindeberg's condition in the Lindeberg-Feller CLT.

\begin{thm}\label{MainThm}
The inequality 
\begin{eqnarray}
\limsup_{n \rightarrow \infty} K\left(\xi,\sum_{k=1}^n \xi_{n,k}\right) \leq \Lin\label{ineqMainThm}
\end{eqnarray}
is valid.
\end{thm}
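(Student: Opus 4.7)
The plan is to combine the two principal tools assembled in this section: the reduction of the Kolmogorov distance to expectations against the class $\mathcal{H}$ given by Lemma \ref{EasyK}, and the uniform Stein-type bound (\ref{SteinResult}) just derived.

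First, I would apply Lemma \ref{EasyK} with $\eta = \xi$ (which is continuously distributed) and $\eta_n = \sum_{k=1}^n \xi_{n,k}$, obtaining the identity
\[
\limsup_{n \to \infty} K\left(\xi, \sum_{k=1}^n \xi_{n,k}\right) = \sup_{h \in \mathcal{H}} \limsup_{n \to \infty} \left|\mathbb{E}\left[h(\xi) - h\left(\sum_{k=1}^n \xi_{n,k}\right)\right]\right|.
\]
The purpose of this step is to trade the non-smooth supremum defining $K$ for one over the class $\mathcal{H}$ of strictly decreasing smooth test functions, on which Stein's method is effective.

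Next, I would invoke (\ref{SteinResult}), which says that for every fixed $h \in \mathcal{H}$,
\[
\limsup_{n \to \infty} \left|\mathbb{E}\left[h(\xi) - h\left(\sum_{k=1}^n \xi_{n,k}\right)\right]\right| \leq \Lin.
\]
The essential feature of this estimate is that its right-hand side is independent of $h$. Passing to the supremum over $h \in \mathcal{H}$ on the left-hand side and combining with the identity from the first step immediately gives (\ref{ineqMainThm}).

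The substantive difficulty here lies not in this two-step combination, which is a one-line patching together of already-stated ingredients, but rather in ensuring that the bound (\ref{SteinResult}) is genuinely \emph{uniform} over $h \in \mathcal{H}$. This is precisely what the preceding Stein-method work has been set up to deliver: the inequalities (\ref{fhbounded}) and, most crucially, (\ref{DiffFh}) --- the latter obtained by representing a generic $h \in \mathcal{H}$ as an integral of indicator-difference functions $h_z$ and invoking the Barbour--Hall bound (\ref{SteinUpperBound}) --- replace the offending factor $\left\|h^{\prime \prime}\right\|_\infty$ that ruined the classical estimate (\ref{ClassResult}) by a universal constant. Without this $h$-uniformity the supremum over $\mathcal{H}$ would diverge and the combination step would collapse, so this uniformity is exactly the gain of Stein's method over the classical approach and what makes a Kolmogorov-distance bound possible at all.
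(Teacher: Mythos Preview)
Your proposal is correct and matches the paper's own argument exactly: the paper simply states that, recalling (\ref{AppForm}), the Stein-type bound (\ref{SteinResult}) entails (\ref{ineqMainThm}). Your additional remarks on why the $h$-uniformity of (\ref{SteinResult}) is the crucial gain of Stein's method over the classical estimate (\ref{ClassResult}) are accurate and well put.
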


\section{A lower bound for $\displaystyle{\limsup_{n \rightarrow \infty} K\left(\xi,\sum_{k=1}^n \xi_{n,k}\right)}$}

\paragraph{} In \cite{BHall} the following theorem is proved.

\begin{thm}\label{BHallLowerBoundTheorem}
There exists a constant $C > 0$, not depending on $\left\{\xi_{n,k}\right\}$, such that 
\begin{eqnarray}
\limsup_{n \rightarrow \infty} \sum_{k=1}^n \mathbb{E}\left[\xi_{n,k}^2 \left(1 - e^{-\frac{1}{4} \xi_{n,k}^2}\right)\right] \leq C \limsup_{n \rightarrow \infty} K\left(\xi,\sum_{k=1}^n \xi_{n,k}\right).\label{BHallIneq2}
\end{eqnarray}
Moreover, (\ref{BHallIneq2}) can be shown to hold with $C \leq 41$.
\end{thm}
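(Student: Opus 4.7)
The plan is to reduce Theorem \ref{BHallLowerBoundTheorem} to a Fourier estimate via a carefully chosen smooth test function. The starting observation is that for any absolutely continuous $h : \mathbb{R} \to \mathbb{R}$ with $h(\pm\infty)=0$ and $h' \in L^1(\mathbb{R})$, an integration by parts on the Stieltjes integrals gives
$$\mathbb{E}[h(\xi) - h(S_n)] = -\int_{-\infty}^{\infty} h'(x)\bigl(\mathbb{P}[\xi \le x] - \mathbb{P}[S_n \le x]\bigr)\,dx, \qquad S_n := \sum_{k=1}^n \xi_{n,k},$$
so that $|\mathbb{E}[h(\xi) - h(S_n)]| \le \|h'\|_1\,K(\xi, S_n)$. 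It therefore suffices to exhibit an $h$ for which the left-hand side is also bounded below by an absolute multiple of $\sum_{k=1}^n \mathbb{E}[\xi_{n,k}^2(1-e^{-\xi_{n,k}^2/4})]$. The natural candidate is $h(x) = e^{-x^2/4}$, whose derivative satisfies $\|h'\|_1 = 2$.

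The lower bound on $|\mathbb{E}[h(\xi)-h(S_n)]|$ is then obtained by passing to characteristic functions. Writing $\phi(t)=e^{-t^2/2}$, $\phi_{n,k}(t)=\mathbb{E}[e^{it\xi_{n,k}}]$, and $\psi_n(t)=\prod_k\phi_{n,k}(t)$, the Gaussian Fourier identity $e^{-x^2/4}=\pi^{-1/2}\int e^{-t^2}\cos(tx)\,dt$ yields
$$\mathbb{E}[h(\xi) - h(S_n)] = \frac{1}{\sqrt\pi}\int_{-\infty}^{\infty} e^{-t^2}\bigl(\phi(t) - \mathrm{Re}\,\psi_n(t)\bigr)\,dt.$$
Setting $g_{n,k}(t) = \phi_{n,k}(t)\,e^{t^2\sigma_{n,k}^2/2}$, so that Gaussian summands give $g_{n,k}\equiv 1$, one factors $\phi(t)-\psi_n(t) = \phi(t)\bigl(1 - \prod_k g_{n,k}(t)\bigr)$. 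Feller's condition (\ref{FelNegCond}) forces $\max_k |g_{n,k}(t)-1| \to 0$ locally uniformly in $t$, so the telescoping expansion
$$1 - \prod_{k=1}^n g_{n,k}(t) = \sum_{k=1}^n (1 - g_{n,k}(t)) + R_n(t), \qquad |R_n(t)| \le \Bigl(\sum_{k=1}^n |1-g_{n,k}(t)|\Bigr)^{\!2},$$
reduces matters to a per-summand analysis. Writing $1-g_{n,k}(t) = (e^{-t^2\sigma_{n,k}^2/2}-\phi_{n,k}(t))e^{t^2\sigma_{n,k}^2/2}$, swapping the $t$-integration with the $\xi_{n,k}$-expectation by Fubini, and applying the Gaussian-cosine identity $\pi^{-1/2}\int e^{-at^2}\cos(tx)\,dt = a^{-1/2}e^{-x^2/(4a)}$ produces the exponential factor $e^{-\xi_{n,k}^2/4}$ that appears in the theorem, while the leading Taylor contribution from $\phi_{n,k}(t) - e^{-t^2\sigma_{n,k}^2/2}$ supplies the $\xi_{n,k}^2$ prefactor.

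The main obstacle will be sign and cancellation bookkeeping: the contributions $1-g_{n,k}(t)$ can take either sign according to whether $\xi_{n,k}$ is sub- or super-Gaussian in its fourth cumulant, so the naive telescoping might see cancellations that weaken the lower bound. Overcoming this requires either absorbing odd-moment contributions via $\mathbb{E}[\xi_{n,k}]=0$ and symmetrization, or moving to an $L^2$-type variant in which cancellations are replaced by sums of squares, and then controlling $R_n(t)$ after integration against $e^{-t^2}$ by combining Feller's negligibility with the fact that $\sum_k|1-g_{n,k}(t)|$ is itself dominated by the target quantity up to a $t$-dependent factor. Finally, a careful optimization over the scaling parameter $\alpha$ in the rescaled test function $e^{-\alpha x^2}$, together with a judicious truncation of the $t$-integration to a compact interval, should tighten the numerical constants to yield the explicit bound $C\le 41$.
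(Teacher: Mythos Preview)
The paper does not prove Theorem~\ref{BHallLowerBoundTheorem}; it merely quotes it from Barbour and Hall \cite{BHall}. What the paper \emph{does} prove, in Appendix~A, is the sharper companion Theorem~\ref{thmBHallIneqour} (with $\gamma=1/2$ and $C\le 30.3$), and that argument is the natural point of comparison for your proposal.

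Your route and the paper's are genuinely different. You work on the Fourier side: pick $h(x)=e^{-x^2/4}$, bound $|\mathbb{E}[h(\xi)-h(S_n)]|$ above by $\|h'\|_1\,K(\xi,S_n)$, and try to bound it below via the product $\prod_k g_{n,k}$ of ``normalized'' characteristic functions. The paper, by contrast, stays in the Stein framework. It introduces an auxiliary symmetric measure $\mu$, sets $f(x)=(1-\widehat\mu(x))/x$, and writes the weighted Lindeberg quantity as an exact sum $\sum_k\mathbb{E}[\xi_{n,k}^2(\beta_{n,k}+\gamma_n+\delta_{n,k}+\epsilon_{n,k})]$, where the identity in Lemma~3 of Appendix~A (an integration-by-parts against the Gaussian) produces the factor $\psi_{1/2}$ replacing your $\phi_{1/4}$. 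Two of the four pieces are bounded by $K(\xi,S_n)$ via Lemma~5, one is absorbed back into the left-hand side with a coefficient strictly less than $1$, and the last vanishes under Feller's condition. There is no telescoping product, no remainder $R_n$, and crucially no sign ambiguity: each piece is controlled by an absolute-value estimate rather than by cancellation.

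That last point is exactly where your proposal has a real gap. You correctly identify that the summands $1-g_{n,k}(t)$ have no fixed sign, but your suggested remedies are only gestures. ``Symmetrization'' does not force $\mathrm{Re}(1-g_{n,k}(t))\ge 0$; ``moving to an $L^2$-type variant'' would change the left-hand side of the inequality you are trying to prove; and the claim that $\sum_k|1-g_{n,k}(t)|$ is dominated by the target $\sum_k\mathbb{E}[\xi_{n,k}^2(1-e^{-\xi_{n,k}^2/4})]$ is not established (and is not obviously true without third-moment control, since $|1-g_{n,k}(t)|$ involves the full tail of $\xi_{n,k}$ weighted by $t$). Until you can show that the integrated main term $\pi^{-1/2}\int e^{-t^2}\phi(t)\sum_k(1-g_{n,k}(t))\,dt$ has a definite sign and is bounded below by a fixed multiple of the Lindeberg quantity, the argument does not close. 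The Stein-type decomposition in Appendix~A is designed precisely to avoid this difficulty, which is why both \cite{BHall} and the present paper proceed that way rather than through the characteristic-function product.
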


\paragraph{} Let $\Phi_L$ be the collection of all non-decreasing functions $\phi : \mathbb{R}^+ \rightarrow \left[0,1\right]$  which are strictly positive on $\mathbb{R}^+_0$ and for which $\displaystyle{\lim_{\epsilon \downarrow 0} \phi(\epsilon) = 0}$.
\paragraph{} Inspired by Theorem \ref{BHallLowerBoundTheorem}, we define for $\phi \in \Phi_L$ the number
\begin{eqnarray}
\widetilde{\textrm{\upshape{Lin}}}_\phi\left(\left\{\xi_{n,k}\right\}\right) = \limsup_{n \rightarrow \infty} \sum_{k=1}^n \mathbb{E}\left[\xi_{n,k}^2 \phi(\left|\xi_{n,k}\right|)\right]
\end{eqnarray}
which we call the \textit{relaxed Lindeberg index (with respect to $\phi$)}.  Furthermore, we put for $\gamma > 0$
\begin{eqnarray}
 \phi_{\gamma}(x) = 1 - e^{-\gamma x^2}
\end{eqnarray}
and
\begin{eqnarray}
\widetilde{\textrm{\upshape{Lin}}}_{\gamma}\left(\left\{\xi_{n,k}\right\}\right) = \widetilde{\textrm{\upshape{Lin}}}_{\phi_\gamma}\left(\left\{\xi_{n,k}\right\}\right).
\end{eqnarray}
\paragraph{} The following proposition collects some basic properties of the relaxed Lindeberg index.
\begin{pro}\label{thm:PropRelLin}
For any $\phi \in \Phi_L$ we have
\begin{eqnarray}
\widetilde{\textrm{\upshape{Lin}}}_\phi\left(\left\{\xi_{n,k}\right\}\right) \leq \Lin\label{RelLinLeqLin}
\end{eqnarray}
and, in addition, 
\begin{eqnarray}
\left\{\xi_{n,k}\right\} \textrm{ satisfies Lindeberg's condition }  \Leftrightarrow \widetilde{\textrm{\upshape{Lin}}}_\phi \left(\left\{\xi_{n,k}\right\}\right) = 0.\label{LCByRelLin}
\end{eqnarray}
Furthermore, for any sequence $(\phi_n)_n$ in $\Phi_L$ we have
\begin{eqnarray}
\phi_n \uparrow 1_{]0, \infty[} \Rightarrow \widetilde{\textrm{\upshape{Lin}}}_{\phi_n}\left(\left\{\xi_{n,k}\right\}\right) \uparrow \Lin.\label{gensupLin}
\end{eqnarray}
In particular,
\begin{eqnarray}
 \gamma \uparrow \infty \Rightarrow \widetilde{\textrm{\upshape{Lin}}}_\gamma\left(\left\{\xi_{n,k}\right\}\right) \uparrow \Lin.\label{supLin}
\end{eqnarray}
Finally, for any $\gamma > 0$,
\begin{eqnarray}
\widetilde{\textrm{\upshape{Lin}}}_{2 \gamma}\left(\left\{\xi_{n,k}\right\}\right) \leq 2 \widetilde{\textrm{\upshape{Lin}}}_{\gamma}\left(\left\{\xi_{n,k}\right\}\right).\label{CompLin}
\end{eqnarray} 
\end{pro}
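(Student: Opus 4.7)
The plan is to treat the five assertions in order; all of them reduce to splitting the mass at a threshold $\epsilon$ and exploiting the two defining features of $\Phi_L$, namely $\phi \leq 1$ (to bound the tail part) and monotonicity with $\phi(\epsilon) > 0$ (to bound the bulk part).

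For (\ref{RelLinLeqLin}) I would fix $\epsilon>0$ and write
\begin{displaymath}
\mathbb{E}[\xi_{n,k}^2\phi(|\xi_{n,k}|)] \leq \mathbb{E}[\xi_{n,k}^2;|\xi_{n,k}|\geq\epsilon] + \phi(\epsilon)\,\sigma_{n,k}^2,
\end{displaymath}
using $\phi\leq 1$ on the tail and monotonicity on the bulk. Summing $k$ (so the $\sigma_{n,k}^2$ telescope to $1$) and taking $\limsup_{n}$ yields $\widetilde{\textrm{Lin}}_\phi\leq \textrm{Lin}+\phi(\epsilon)$; letting $\epsilon\downarrow 0$ kills $\phi(\epsilon)$ by the defining property of $\Phi_L$. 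Assertion (\ref{LCByRelLin}) is then immediate: the forward implication follows from (\ref{RelLinLeqLin}), and for the converse, if $\widetilde{\textrm{Lin}}_\phi=0$ then using $\phi(|\xi_{n,k}|)\geq\phi(\epsilon)>0$ on $\{|\xi_{n,k}|\geq\epsilon\}$ gives $\sum_k\mathbb{E}[\xi_{n,k}^2;|\xi_{n,k}|\geq\epsilon]\leq \phi(\epsilon)^{-1}\widetilde{\textrm{Lin}}_\phi=0$, and taking $\sup_\epsilon$ delivers $\textrm{Lin}=0$.

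The main point, and the only step that requires a slightly more delicate argument, is the lower bound in (\ref{gensupLin}). Monotonicity of $n\mapsto\widetilde{\textrm{Lin}}_{\phi_n}$ is obvious from $\phi_n\uparrow$, and the upper bound $\sup_n\widetilde{\textrm{Lin}}_{\phi_n}\leq\textrm{Lin}$ follows from (\ref{RelLinLeqLin}). For the reverse, I would fix $\epsilon>0$ and $\delta\in(0,1)$ and exploit the hypothesis $\phi_m(\epsilon)\uparrow 1$ to select $m$ large enough that $\phi_m(\epsilon)\geq 1-\delta$; by monotonicity of $\phi_m$ this gives $\phi_m(|\xi_{n,k}|)\geq 1-\delta$ on $\{|\xi_{n,k}|\geq\epsilon\}$, hence
\begin{displaymath}
\sum_{k=1}^n \mathbb{E}[\xi_{n,k}^2;|\xi_{n,k}|\geq \epsilon] \leq \frac{1}{1-\delta}\sum_{k=1}^n\mathbb{E}[\xi_{n,k}^2\phi_m(|\xi_{n,k}|)].
\end{displaymath}
Taking $\limsup_n$, then $\sup$ over $m$, then $\sup$ over $\epsilon$, and finally letting $\delta\downarrow 0$ produces $\textrm{Lin}\leq \sup_n\widetilde{\textrm{Lin}}_{\phi_n}$. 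Statement (\ref{supLin}) then follows from (\ref{gensupLin}) applied to the sequence $\phi_{\gamma_n}$ for any $\gamma_n\uparrow\infty$, after noting that $\phi_\gamma(x)\uparrow 1_{]0,\infty[}(x)$ pointwise.

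Finally, (\ref{CompLin}) is the algebraic observation $1-e^{-2\gamma x^2}=(1-e^{-\gamma x^2})(1+e^{-\gamma x^2})\leq 2(1-e^{-\gamma x^2})$, so that $\xi_{n,k}^2\phi_{2\gamma}(|\xi_{n,k}|)\leq 2\xi_{n,k}^2\phi_\gamma(|\xi_{n,k}|)$ pointwise, and the inequality survives taking expectations, summing and taking $\limsup$. No step should present a genuine obstacle; the only care needed is the dependence of $m$ on both $\epsilon$ and $\delta$ in (\ref{gensupLin}), which is harmless because the two variables are handled in separate limit passages.
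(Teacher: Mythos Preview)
Your proposal is correct and follows essentially the same route as the paper's proof: the same threshold decomposition for (\ref{RelLinLeqLin}), the same $\phi(\epsilon)^{-1}$ bound for the converse in (\ref{LCByRelLin}), and the same factorization $1-e^{-2\gamma x^2}=(1-e^{-\gamma x^2})(1+e^{-\gamma x^2})$ for (\ref{CompLin}). The only cosmetic difference is in (\ref{gensupLin}): the paper picks $n_0$ with $\phi_{n_0}(\epsilon)\geq 1-\epsilon$ and obtains the \emph{additive} bound $\limsup_n\sum_k\mathbb{E}[\xi_{n,k}^2;|\xi_{n,k}|>\epsilon]\leq \widetilde{\textrm{Lin}}_{\phi_{n_0}}+\epsilon$ (using $\sum_k\sigma_{n,k}^2=1$ to absorb the residual $(1-\phi_{n_0})\leq\epsilon$), whereas you use a separate parameter $\delta$ and a \emph{multiplicative} factor $1/(1-\delta)$; both achieve the same conclusion by the same mechanism.
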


\begin{proof}
(\ref{RelLinLeqLin}) follows from
\begin{eqnarray*}
\lefteqn{\sum_{k=1}^n \mathbb{E}\left[\xi_{n,k}^2 \phi\left(\left|\xi_{n,k}\right|\right)\right]}\\
&=& \sum_{k=1}^n \mathbb{E}\left[\xi_{n,k}^2 \phi\left(\left|\xi_{n,k}\right|\right);\left|\xi_{n,k}\right| \geq \epsilon \right] + \sum_{k=1}^n \mathbb{E}\left[\xi_{n,k}^2 \phi\left(\left|\xi_{n,k}\right|\right) ; \left|\xi_{n,k}\right| < \epsilon\right]\\
&\leq& \sum_{k=1}^n \mathbb{E}\left[\xi_{n,k}^2 ; \left|\xi_{n,k}\right| \geq \epsilon \right] + \phi(\epsilon)
\end{eqnarray*}
and the fact that $\displaystyle{\lim_{\epsilon \downarrow 0} \phi(\epsilon) = 0}$.
\paragraph{} Notice that (\ref{RelLinLeqLin}) entails that $\widetilde{\textrm{\upshape{Lin}}}_\phi \left(\left\{\xi_{n,k}\right\}\right) = 0$ if $\left\{\xi_{n,k}\right\}$  satisfies Lindeberg's condition. For the converse implication, observe that, for any $\epsilon > 0$,
\begin{displaymath}
\phi(\epsilon) \sum_{k=1}^n \mathbb{E}\left[\xi_{n,k}^2 ; \left|\xi_{n,k}\right| > \epsilon\right] \leq \sum_{k=1}^n \mathbb{E}\left[\xi_{n,k}^2 \phi\left(\left|\xi_{n,k}\right|\right)\right]
\end{displaymath}
and $\phi(\epsilon) > 0$. This proves (\ref{LCByRelLin}).
\paragraph{} In order to prove (\ref{gensupLin}), we choose for $\epsilon > 0$ a natural number $n_0$ such that 
\begin{eqnarray*}
\phi_{n_0}(\epsilon) \geq 1 - \epsilon.
\end{eqnarray*}
Then
\begin{displaymath}
\limsup_{n \rightarrow \infty} \sum_{k=1}^n \mathbb{E}\left[\xi_{n,k}^2 ; \left|\xi_{n,k}\right| > \epsilon\right] \leq \limsup_{n \rightarrow \infty} \sum_{k=1}^n \mathbb{E}\left[\xi_{n,k}^2 \phi_{n_0}\left(\left|\xi_{n,k}\right|\right)\right] + \epsilon
\end{displaymath}
and  (\ref{gensupLin}) follows.
\paragraph{} Finally, (\ref{CompLin}) follows from the observation that
\begin{eqnarray*}
\lefteqn{\sum_{k=1}^{n} \mathbb{E}\left[\xi_{n,k}^2 \phi_{2 \gamma}\left(\left|\xi_{n,k}\right|\right)\right]}\\
&=&\sum_{k=1}^n \mathbb{E}\left[\xi_{n,k}^2 \phi_\gamma\left(\left|\xi_{n,k}\right|\right)\right] +  \sum_{k=1}^n \mathbb{E}\left[\xi_{n,k}^2 \phi_{\gamma}\left(\left|\xi_{n,k}\right|\right) e^{-\gamma \xi_{n,k}^2}\right].
\end{eqnarray*}
\end{proof}

\paragraph{} Proposition \ref{thm:ExplRelLin} provides an explicit formula for $\widetilde{\textrm{\upshape{Lin}}}_{\gamma}\left(\left\{\eta_{\alpha,n,k}\right\}\right)$, where $\{\eta_{\alpha,n,k}\}$ is the STA determined by (\ref{def:sn}), (\ref{def:STAeta1}) and (\ref{def:STAeta2}). Recall that $\{\eta_{\alpha,n,k}\}$ satisfies Feller's condition (\ref{FelNegCond}) and that
$\textrm{\upshape{Lin}}\left(\left\{\eta_{\alpha,n,k}\right\}\right) = \alpha.$ (Proposition \ref{thm:ExplLin})

\begin{pro}\label{thm:ExplRelLin}
The formula
\begin{eqnarray}
\widetilde{\textrm{\upshape{Lin}}}_{\gamma}\left(\left\{\eta_{\alpha,n,k}\right\}\right) = \alpha \left(1 - \frac{1 - e^{-\gamma(1-\alpha)}}{\gamma (1 - \alpha)}\right)\label{eq:ExplRelLin}
\end{eqnarray}
is valid.
\end{pro}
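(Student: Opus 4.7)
The plan is to evaluate $\sum_{k=1}^n \mathbb{E}\left[\eta_{\alpha,n,k}^2 \phi_\gamma(|\eta_{\alpha,n,k}|)\right]$ directly from the explicit two--point distribution (\ref{def:STAeta1})--(\ref{def:STAeta2}) and then to pass to the limit (which will in fact exist, so the $\limsup$ is redundant). Since $\eta_{\alpha,n,k}^2$ takes only the values $1/s_n^2$ and $k/s_n^2$, one computes
\begin{displaymath}
\mathbb{E}\left[\eta_{\alpha,n,k}^2 \phi_\gamma(|\eta_{\alpha,n,k}|)\right] = \frac{1 - \beta k^{-1}}{s_n^2}\left(1 - e^{-\gamma/s_n^2}\right) + \frac{\beta}{s_n^2}\left(1 - e^{-\gamma k/s_n^2}\right).
\end{displaymath}

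Summing over $k$ splits the expression into two pieces. For the first, I would use the identity $\sum_{k=1}^n (1 - \beta k^{-1}) = n - \beta \sum_{k=1}^n k^{-1} = s_n^2 - \beta n$, which follows at once from the definition (\ref{def:sn}) of $s_n^2$. Multiplied by $s_n^{-2}(1 - e^{-\gamma/s_n^2})$, this piece is bounded above by $1 - e^{-\gamma/s_n^2}$, which tends to $0$ since $s_n^2 \to \infty$. Hence the first piece is asymptotically negligible.

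The second piece equals $\beta \cdot \frac{1}{s_n^2}\sum_{k=1}^n g(k/s_n^2)$ with $g(y) = 1 - e^{-\gamma y}$. The key observation is that $n/s_n^2 \to 1/(1+\beta) = 1 - \alpha$ (using $1 + \beta = 1/(1-\alpha)$ and $H_n/n \to 0$), while the mesh $1/s_n^2$ tends to $0$. Therefore this is a Riemann sum converging to
\begin{displaymath}
\beta \int_0^{1-\alpha}\left(1 - e^{-\gamma y}\right) dy = \beta (1-\alpha) - \frac{\beta}{\gamma}\left(1 - e^{-\gamma(1-\alpha)}\right).
\end{displaymath}
Applying the identities $\beta(1-\alpha) = \alpha$ and $\beta/\gamma = \alpha/\bigl(\gamma(1-\alpha)\bigr)$ then yields precisely the right-hand side of (\ref{eq:ExplRelLin}).

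The main obstacle is a clean justification of the Riemann-sum convergence: the upper endpoint $n/s_n^2$ depends on $n$ and approaches $1-\alpha$ rather than $1$, so one has to be slightly careful. One way is to write the sum as $\int_0^{n/s_n^2} g(\lceil s_n^2 y\rceil/s_n^2)\, dy$ and use uniform continuity of $g$ on $[0,1]$ (or dominated convergence) to replace the integrand by $g(y)$ and the upper limit by $1 - \alpha$. Beyond that, the argument is purely a book-keeping exercise.
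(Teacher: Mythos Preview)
Your proof is correct and follows essentially the same route as the paper: compute $\mathbb{E}\bigl[\eta_{\alpha,n,k}^2\phi_\gamma(|\eta_{\alpha,n,k}|)\bigr]$ from the explicit distribution, sum over $k$, and pass to the limit using $n/s_n^2\to 1/(1+\beta)=1-\alpha$ and $s_n^{-2}\sum_{k\le n}k^{-1}\to 0$. The one noteworthy difference is in the handling of the exponential sum $s_n^{-2}\sum_{k=1}^n e^{-\gamma k/s_n^2}$: you treat it as a Riemann sum for $\int_0^{1-\alpha}e^{-\gamma y}\,dy$ (which is perfectly valid but needs the short moving-endpoint argument you sketch), whereas the paper sums the geometric progression in closed form and reads off the limit directly from $s_n^2(1-e^{-\gamma/s_n^2})\to\gamma$ and $e^{-\gamma n/s_n^2}\to e^{-\gamma(1-\alpha)}$, avoiding any Riemann-sum subtlety.
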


\begin {proof}
An easy calculation leads to 
\begin{eqnarray*}
\lefteqn{\sum_{k=1}^n \mathbb{E}\left[\eta_{\alpha,n,k}^2 \phi_\gamma\left(\left|\eta_{\alpha,n,k}\right|\right)\right]}\\
&=& \frac{n}{s_n^2} \left(1 - e^{-\frac{\gamma}{s_n^2}}\right) - \beta e^{-\frac{\gamma}{s_n^2}} \left(\frac{1}{s_n^2} \sum_{k=1}^n k^{-1}\right) + \beta \frac{n}{s_n^2} - \beta \left(\frac{1}{s_n^2} \sum_{k=1}^n e^{-\frac{\gamma k}{s_n^2}}\right).
\end{eqnarray*}
Arguing analogously as in the proof of Proposition \ref{thm:ExplLin}, we see that 
\begin{displaymath}
\frac{n}{s_n^2} \rightarrow \frac{1}{1 + \beta} \textrm{ and }  \frac{1}{s_n^2} \sum_{k=1}^n k^{-1} \rightarrow 0.
\end{displaymath}
Also, 
\begin{displaymath}
\frac{1}{s_n^2} \sum_{k=1}^n e^{-\frac{\gamma k}{s_n^2}} = e^{-\frac{\gamma}{s_n^2}} \frac{1 - e^{-\frac{n\gamma}{s_n^2}}}{s_n^2\left(1 - e^{-\frac{\gamma}{s_n^2}}\right)} \rightarrow \frac{1 - e^{- \frac{\gamma}{1 + \beta}}}{\gamma}.
\end{displaymath}
Recalling that $\alpha = \frac{\beta}{1 + \beta}$, the previous limit relations yield the desired result.
\end{proof}

\paragraph{} In terms of the relaxed Lindeberg index, Theorem \ref{BHallLowerBoundTheorem} is turned into

\begin{thm}\label{BHallLowerBoundTheoremNew}
There exists a constant $C_{1/4} > 0$, not depending on $\left\{\xi_{n,k}\right\}$, such that 
\begin{eqnarray}
\widetilde{\textrm{\upshape{Lin}}}_{1/4}\left(\left\{\xi_{n,k}\right\}\right) \leq C_{1/4} \limsup_{n \rightarrow \infty} K\left(\xi,\sum_{k=1}^n \xi_{n,k}\right).\label{BHallIneq2new}
\end{eqnarray}
Moreover, (\ref{BHallIneq2new}) can be shown to hold with $C_{1/4} \leq 41$.
\end{thm}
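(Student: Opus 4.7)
The plan is essentially to do no work at all: I would observe that Theorem \ref{BHallLowerBoundTheoremNew} is merely a linguistic reformulation of Theorem \ref{BHallLowerBoundTheorem} in the language of the relaxed Lindeberg index just introduced. Concretely, since $\phi_{1/4}(x) = 1 - e^{-\frac{1}{4} x^2}$, unfolding the definition of $\widetilde{\textrm{\upshape{Lin}}}_{\gamma}\left(\left\{\xi_{n,k}\right\}\right)$ at $\gamma = 1/4$ yields
\begin{eqnarray*}
\widetilde{\textrm{\upshape{Lin}}}_{1/4}\left(\left\{\xi_{n,k}\right\}\right) = \limsup_{n \rightarrow \infty} \sum_{k=1}^n \mathbb{E}\left[\xi_{n,k}^2 \left(1 - e^{-\frac{1}{4} \xi_{n,k}^2}\right)\right],
\end{eqnarray*}
which is literally the left-hand side of inequality (\ref{BHallIneq2}). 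Theorem \ref{BHallLowerBoundTheorem} therefore delivers (\ref{BHallIneq2new}) directly, with the same constant $C_{1/4} = C \leq 41$, and there is nothing further to verify.

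\paragraph{} The only step requiring any thought was the one already completed before stating the theorem, namely the introduction of the family $\widetilde{\textrm{\upshape{Lin}}}_\phi$ and its one-parameter subfamily $\widetilde{\textrm{\upshape{Lin}}}_\gamma = \widetilde{\textrm{\upshape{Lin}}}_{\phi_\gamma}$. The content of Theorem \ref{BHallLowerBoundTheoremNew} is not a new inequality but the recognition that the Barbour-Hall lower bound is naturally phrased as a control on a relaxed Lindeberg index. In view of Proposition \ref{thm:PropRelLin}, in particular (\ref{RelLinLeqLin}) and (\ref{supLin}), this recognition places Barbour and Hall's estimate on exactly the same footing as the upper bound (\ref{ineqMainThm}) proved in Theorem \ref{MainThm}, both being expressed in terms of the same scale of quantities converging upward to $\Lin$.

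\paragraph{} Consequently there is no main obstacle to overcome: the substantive analytic input, namely the explicit constant $41$, is imported verbatim from \cite{BHall} via Theorem \ref{BHallLowerBoundTheorem}, and the passage to Theorem \ref{BHallLowerBoundTheoremNew} consists of nothing more than reading off that $\phi_{1/4}(|\xi_{n,k}|) = 1 - e^{-\frac{1}{4}\xi_{n,k}^2}$.
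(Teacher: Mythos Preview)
Your proposal is correct and matches the paper's own treatment exactly: the paper does not supply a separate proof of Theorem~\ref{BHallLowerBoundTheoremNew} but simply prefaces it with the sentence ``In terms of the relaxed Lindeberg index, Theorem~\ref{BHallLowerBoundTheorem} is turned into,'' making explicit that the result is nothing more than Theorem~\ref{BHallLowerBoundTheorem} rewritten via the definition $\widetilde{\textrm{\upshape{Lin}}}_{1/4}\left(\left\{\xi_{n,k}\right\}\right) = \limsup_{n \rightarrow \infty} \sum_{k=1}^n \mathbb{E}\left[\xi_{n,k}^2 \left(1 - e^{-\frac{1}{4} \xi_{n,k}^2}\right)\right]$.
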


\paragraph{} Theorem \ref{BHallLowerBoundTheoremNew} generalizes the necessity of Lindeberg's condition in the Lindeberg-Feller CLT and therefore constitutes a counterpart for Theorem \ref{MainThm}. It also has the following

\begin{gev}
For each $\gamma > 0$ there exists a constant $C_\gamma > 0$, not depending on $\left\{\xi_{n,k}\right\}$, such that 
\begin{eqnarray}
\widetilde{\textrm{\upshape{Lin}}}_{\gamma}\left(\left\{\xi_{n,k}\right\}\right) \leq C_\gamma \limsup_{n \rightarrow \infty} K\left(\xi,\sum_{k=1}^n \xi_{n,k}\right).
\end{eqnarray}
\end{gev}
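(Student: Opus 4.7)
The plan is to bootstrap from the already established case $\gamma = 1/4$ (Theorem \ref{BHallLowerBoundTheoremNew}) to arbitrary $\gamma > 0$ by combining two facts about the family $\left\{\widetilde{\textrm{\upshape{Lin}}}_\gamma\right\}_{\gamma > 0}$: its monotonicity in the parameter $\gamma$, and the doubling inequality (\ref{CompLin}) already recorded in Proposition \ref{thm:PropRelLin}.

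I first observe the monotonicity: for each fixed $x \geq 0$, the map $\gamma \mapsto \phi_\gamma(x) = 1 - e^{-\gamma x^2}$ is non-decreasing, so $\phi_{\gamma_1} \leq \phi_{\gamma_2}$ pointwise whenever $\gamma_1 \leq \gamma_2$; integrating against the non-negative expectation then yields $\widetilde{\textrm{\upshape{Lin}}}_{\gamma_1}\!\left(\left\{\xi_{n,k}\right\}\right) \leq \widetilde{\textrm{\upshape{Lin}}}_{\gamma_2}\!\left(\left\{\xi_{n,k}\right\}\right)$. In the easy range $0 < \gamma \leq 1/4$, this gives $\widetilde{\textrm{\upshape{Lin}}}_\gamma\!\left(\left\{\xi_{n,k}\right\}\right) \leq \widetilde{\textrm{\upshape{Lin}}}_{1/4}\!\left(\left\{\xi_{n,k}\right\}\right)$, and Theorem \ref{BHallLowerBoundTheoremNew} finishes the argument with $C_\gamma = 41$.

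For $\gamma > 1/4$, I pick the smallest non-negative integer $m$ with $2^m/4 \geq \gamma$, i.e.\ $m = \lceil \log_2(4\gamma) \rceil$. Monotonicity first gives $\widetilde{\textrm{\upshape{Lin}}}_\gamma\!\left(\left\{\xi_{n,k}\right\}\right) \leq \widetilde{\textrm{\upshape{Lin}}}_{2^m/4}\!\left(\left\{\xi_{n,k}\right\}\right)$, and then $m$ successive applications of (\ref{CompLin}) produce
\begin{displaymath}
\widetilde{\textrm{\upshape{Lin}}}_{2^m/4}\!\left(\left\{\xi_{n,k}\right\}\right) \leq 2\,\widetilde{\textrm{\upshape{Lin}}}_{2^{m-1}/4}\!\left(\left\{\xi_{n,k}\right\}\right) \leq \cdots \leq 2^m\,\widetilde{\textrm{\upshape{Lin}}}_{1/4}\!\left(\left\{\xi_{n,k}\right\}\right).
\end{displaymath}
Feeding this into Theorem \ref{BHallLowerBoundTheoremNew} delivers the corollary with the explicit constant $C_\gamma = 41\cdot 2^{\lceil \log_2(4\gamma) \rceil}$, which can be crudely bounded by $328\,\gamma$.

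There is no real obstacle: the work has already been done in Proposition \ref{thm:PropRelLin} and Theorem \ref{BHallLowerBoundTheoremNew}, and what remains is just organizing them. The only small point deserving attention is that the iteration of (\ref{CompLin}) produces a constant that grows geometrically in $m$, hence roughly linearly in $\gamma$; this is unavoidable with the tools available and is harmless since we only seek the existence of \emph{some} $C_\gamma$.
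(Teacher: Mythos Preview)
Your argument is correct and matches the paper's approach: the paper's proof is the single line ``Combine (\ref{CompLin}) and (\ref{BHallIneq2new}),'' and you have simply made explicit what this means, including the obvious monotonicity in $\gamma$ that the paper leaves implicit. The explicit constants you extract are a small bonus but not something the paper records.
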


\begin{proof}
Combine (\ref{CompLin}) and (\ref{BHallIneq2new}).
\end{proof}

\paragraph{} In the same spirit we have obtained Theorem \ref{thmBHallIneqour}, which is stronger than Theorem \ref{BHallLowerBoundTheoremNew}. It provides the sharpest lower bound for $\displaystyle{\limsup_{n \rightarrow \infty} K\left(\xi,\sum_{k=1}^n \xi_{n,k}\right)}$ we have produced so far. We defer the proof to Appendix A.

\begin{thm}\label{thmBHallIneqour}
There exists a constant $C_{1/2} > 0$, not depending on $\left\{\xi_{n,k}\right\}$, such that 
\begin{eqnarray}
\widetilde{\textrm{\upshape{Lin}}}_{1/2}\left(\left\{\xi_{n,k}\right\}\right) \leq C_{1/2} \limsup_{n \rightarrow \infty} K\left(\xi,\sum_{k=1}^n \xi_{n,k}\right).\label{BHallIneq2our}
\end{eqnarray}
Moreover, (\ref{BHallIneq2our}) can be shown to hold with $C_{1/2} \leq 30.3$.
\end{thm}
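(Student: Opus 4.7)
The plan is to refine the Barbour--Hall strategy of \cite{BHall} that yields Theorem \ref{BHallLowerBoundTheoremNew}, working directly with an explicit Stein function adapted to the weight $e^{-x^2/2}$ rather than to $e^{-x^2/4}$. A natural candidate is the pair
\[
f(x) = -e^{-x^2/2}, \qquad h(x) = f'(x) - xf(x) = 2x e^{-x^2/2},
\]
for which $\mathbb{E}[h(\xi)] = 0$ by symmetry and $\mathrm{TV}(h) = \int |h'(x)|\,dx = 8e^{-1/2}$ is an explicit small constant.

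First, I would observe that the proof of identity (\ref{BHallIneq}) is purely algebraic and uses only independence, $\mathbb{E}[\xi_{n,k}] = 0$ and $\sum_k \sigma_{n,k}^2 = 1$. It therefore remains valid when the generic Stein solution $f_h$ is replaced by our specific $f$. Writing $S_n = \sum_k \xi_{n,k}$, this yields
\[
\mathbb{E}\!\left[h(\xi) - h(S_n)\right] \;=\; \mathbb{E}\!\left[f'(S_n) - S_n f(S_n)\right] \;=\; \sum_k \sigma_{n,k}^2\, \mathbb{E}[\epsilon_{n,k}] \,-\, \sum_k \mathbb{E}[\xi_{n,k}\,\delta_{n,k}],
\]
with $\delta_{n,k}, \epsilon_{n,k}$ defined as in (\ref{defdelta}) and (\ref{defepsilon}) (with $f$ in place of $f_h$). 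Since $h$ is of bounded variation and vanishes at $\pm\infty$, integration by parts gives the dual bound
\[
\left|\mathbb{E}\!\left[h(\xi) - h(S_n)\right]\right| \;\leq\; \mathrm{TV}(h)\cdot K\!\left(\xi, S_n\right).
\]

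The heart of the argument is a lower bound on the right-hand side of the displayed identity in terms of $\widetilde{\mathrm{Lin}}_{1/2}$. Letting $W_k = \sum_{i \neq k} \xi_{n,i}$ and using that $W_k$ and $\xi_{n,k}$ are independent together with $\mathbb{E}[\xi_{n,k}] = 0$, one rewrites
\[
\mathbb{E}[\xi_{n,k}\,\delta_{n,k}] = \mathbb{E}[\xi_{n,k}\, f(W_k + \xi_{n,k})] - \sigma_{n,k}^2\,\mathbb{E}[f'(W_k)],
\]
and handles $\mathbb{E}[\epsilon_{n,k}]$ analogously. The decisive analytic ingredient is the Gaussian splitting $e^{-(W_k + \xi_{n,k})^2/2} = e^{-W_k^2/2}\, e^{-W_k \xi_{n,k}}\, e^{-\xi_{n,k}^2/2}$: integrating out $\xi_{n,k}$ with $W_k$ held fixed causes precisely the required factor $e^{-\xi_{n,k}^2/2}$ to appear in the principal term, while the error terms are controlled through Feller's condition (\ref{FelNegCond}) together with a quantitative (Berry--Esseen-type) CLT for $W_k$. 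Passing to $\limsup$ then yields
\[
\widetilde{\mathrm{Lin}}_{1/2}\!\left(\{\xi_{n,k}\}\right) \;\leq\; \frac{\mathrm{TV}(h)}{c}\, \limsup_{n \to \infty} K\!\left(\xi, S_n\right)
\]
for some positive explicit $c$.

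The main obstacle is the quantitative constant optimization: one must extract a value of $c$ large enough that the prefactor $\mathrm{TV}(h)/c$ falls below $30.3$. This will require (i) sharp, rather than generic, estimates on the Gaussian integrals produced when $\xi_{n,k}$ is integrated out; (ii) tight control of $\mathbb{E}[f'(W_k)]$ through a quantitative CLT for $W_k$ whose error is governed by $\max_k \sigma_{n,k}$; and possibly (iii) a more carefully tuned Stein function than $-e^{-x^2/2}$ (for instance with an additional scaling parameter or affine correction) so as to maximize $c$ while keeping $\mathrm{TV}(h)$ under control.
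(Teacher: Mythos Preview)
Your proposal has a genuine gap at the ``decisive analytic ingredient''. With $f(x)=-e^{-x^2/2}$ and the Barbour--Hall decomposition, the principal term you must analyze is $\mathbb{E}[\xi_{n,k}f(W_k+\xi_{n,k})]=-\mathbb{E}[\xi_{n,k}e^{-(W_k+\xi_{n,k})^2/2}]$. Your algebraic splitting $e^{-(W_k+\xi_{n,k})^2/2}=e^{-W_k^2/2}e^{-W_k\xi_{n,k}}e^{-\xi_{n,k}^2/2}$ does display the factor $e^{-\xi_{n,k}^2/2}$, but the remaining factor $e^{-W_k^2/2-W_k\xi_{n,k}}$ is \emph{not} asymptotically constant in $\xi_{n,k}$: once you integrate out $W_k$ against its approximate standard normal law you get
\[
\mathbb{E}\bigl[e^{-(W_k+a)^2/2}\bigr]\;\approx\;\frac{1}{\sqrt{2\pi}}\int e^{-w^2}e^{-aw}\,dw\cdot e^{-a^2/2}\;=\;\frac{1}{\sqrt{2}}\,e^{a^2/4}\,e^{-a^2/2}\;=\;\frac{1}{\sqrt{2}}\,e^{-a^2/4}.
\]
So the weight that actually emerges is $e^{-\xi_{n,k}^2/4}$, not $e^{-\xi_{n,k}^2/2}$; this is precisely why the original Barbour--Hall argument lands on $\widetilde{\mathrm{Lin}}_{1/4}$. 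Your sketch, as written, reproduces Theorem~\ref{BHallLowerBoundTheoremNew} rather than improving it, and no amount of constant-tuning at step (iii) changes the exponent.

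The paper avoids this collapse by \emph{not} integrating $W_k$ against its own law. Instead it introduces an auxiliary independent standard normal $\xi$ and an independent copy $\{\widetilde{\xi}_{n,k}\}$, and hinges on the identity
\[
\mathbb{E}\Bigl[a^2\int_0^1 g(\xi+sa)\,e^{-\frac{1}{2}(1-s^2)a^2}\,ds\Bigr]=\mathbb{E}[a\,f(\xi+a)]
\]
for antisymmetric $f$ and $g(x)=xf(x)$ (Lemma~3 in Appendix~A). Taking $f(x)=(1-\widehat{\mu}(x))/x$ for a symmetric probability measure $\mu$, this identity manufactures the weight $\psi_{1/2}(a)=1-\int_0^1 e^{-\frac12(1-s^2)a^2}\,ds$ directly, without any loss from convolving two Gaussians. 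One then compares $\psi_{1/2}$ with $\phi_{1/2}$ via $\phi_{1/2}\le\frac32\psi_{1/2}$, and finally optimizes over normal $\mu$ (choosing variance $\sigma^2\approx 1.7^2$) to get $C_{\psi_{1/2}}\le 20.2$ and hence $C_{1/2}\le\frac32\cdot 20.2\approx 30.3$. The structural novelty is this auxiliary-normal identity; the test function $(1-\widehat{\mu})/x$ and the independent copy are what make the four-term decomposition $\beta_{n,k}+\gamma_n+\delta_{n,k}+\epsilon_{n,k}$ close up.
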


\section{Conclusion}

\paragraph{} Theorem \ref{MainThm} and Theorem \ref{thmBHallIneqour} together yield Theorem \ref{MainResultLowUp}, our main result, which answers Question \ref{QFormQ} by providing an upper and a lower bound for the number  $\displaystyle{\limsup_{n \rightarrow \infty} K\left(\xi,\sum_{k=1}^n \xi_{n,k}\right)}$ which constitute a generalization of the Lindeberg-Feller CLT.

\begin{thm}\label{MainResultLowUp}
There exists a constant $\widetilde{C}_{1/2} > 0$, not depending on $\left\{\xi_{n,k}\right\}$, such that 
\begin{eqnarray}
\widetilde{C}_{1/2} \widetilde{\textrm{\upshape{Lin}}}_{1/2}\left(\left\{\xi_{n,k}\right\}\right) \leq \limsup_{n \rightarrow \infty} K\left(\xi,\sum_{k=1}^n \xi_{n,k}\right) \leq \Lin.\label{MainResultIneqs}
\end{eqnarray}
Moreover, we can take $\widetilde{C}_{1/2} \geq 0.033$.
\end{thm}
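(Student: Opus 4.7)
The plan is that this theorem is a direct packaging of the upper and lower bounds already established in the paper, so the proof amounts to pointing at Theorem \ref{MainThm} and Theorem \ref{thmBHallIneqour} and doing one division. First I would record that the right-hand inequality $\limsup_{n \rightarrow \infty} K(\xi,\sum_{k=1}^n \xi_{n,k}) \leq \textrm{Lin}(\{\xi_{n,k}\})$ is literally the statement of Theorem \ref{MainThm}, the Stein-method generalization of the sufficiency half of the Lindeberg-Feller CLT proved in Section 2. Nothing additional is required on that side.

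Second, for the left-hand inequality I would invoke Theorem \ref{thmBHallIneqour}, which delivers a constant $C_{1/2} > 0$ (with $C_{1/2} \leq 30.3$) satisfying
\begin{equation*}
\widetilde{\textrm{Lin}}_{1/2}(\{\xi_{n,k}\}) \leq C_{1/2} \limsup_{n \rightarrow \infty} K\left(\xi,\sum_{k=1}^n \xi_{n,k}\right).
\end{equation*}
Setting $\widetilde{C}_{1/2} := 1/C_{1/2}$ rearranges this into
\begin{equation*}
\widetilde{C}_{1/2}\, \widetilde{\textrm{Lin}}_{1/2}(\{\xi_{n,k}\}) \leq \limsup_{n \rightarrow \infty} K\left(\xi,\sum_{k=1}^n \xi_{n,k}\right),
\end{equation*}
which is exactly what is claimed. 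The numerical bound $\widetilde{C}_{1/2} \geq 0.033$ then follows from $1/30.3 = 0.0330\ldots \geq 0.033$.

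There is no genuine obstacle in this final theorem itself: all the analytic effort has already been spent, the Stein-based upper bound being built up through Lemma \ref{EasyK}, (\ref{BHallIneq}), (\ref{Taylorf}) and (\ref{DiffFh}), and the lower bound being obtained in Appendix A via a refinement of the Barbour-Hall argument. The only mild point worth verifying is that the $\widetilde{\textrm{Lin}}_{1/2}$ on both sides of (\ref{BHallIneq2our}) and (\ref{MainResultIneqs}) refer to the same quantity, so the two inequalities indeed combine into the single two-sided estimate (\ref{MainResultIneqs}) without any loss of constants.
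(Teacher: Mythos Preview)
Your proposal is correct and matches the paper's own treatment: the paper does not give a separate proof of Theorem \ref{MainResultLowUp} but simply states that Theorem \ref{MainThm} and Theorem \ref{thmBHallIneqour} together yield it, which is exactly the combination you spell out, including the computation $\widetilde{C}_{1/2} = 1/C_{1/2} \geq 1/30.3 \geq 0.033$.
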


\paragraph{} Applying (\ref{eq:ExplLin}), (\ref{eq:ExplRelLin}) and (\ref{MainResultIneqs}) to the STA $\{\eta_{\alpha,n,k}\}$, determined by (\ref{def:sn}), (\ref{def:STAeta1}) and (\ref{def:STAeta2}), we obtain

\begin{thm}\label{thm:ExplMainIneqs}
There exists a constant $\widetilde{C}_{1/2} > 0$ such that for each $\alpha > 0$ 
\begin{eqnarray}
\widetilde{C}_{1/2} \alpha \left(1 - \frac{1 - e^{-\frac{1}{2}(1-\alpha)}}{\frac{1}{2} (1 - \alpha)}\right) \leq \limsup_{n \rightarrow \infty} K\left(\xi,\sum_{k=1}^n \eta_{\alpha,n,k}\right) \leq \alpha.\label{ExplMainIneqs}
\end{eqnarray}
Moreover, we can take $\widetilde{C}_{1/2} \geq 0.033$.
\end{thm}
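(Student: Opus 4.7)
The plan is a direct substitution: Theorem \ref{thm:ExplMainIneqs} is essentially a specialization of Theorem \ref{MainResultLowUp} to the concrete STA $\{\eta_{\alpha,n,k}\}$ defined by (\ref{def:sn}), (\ref{def:STAeta1}) and (\ref{def:STAeta2}). Nothing new has to be proved at the level of inequalities; the task is to verify that all hypotheses of the previous results are satisfied by $\{\eta_{\alpha,n,k}\}$ and then to plug in the already-computed values of the Lindeberg index and the relaxed Lindeberg index at $\gamma = 1/2$.

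First I would recall that, by Proposition \ref{thm:ExplLin}, the STA $\{\eta_{\alpha,n,k}\}$ satisfies Feller's negligibility condition (\ref{FelNegCond}) and $\textrm{\upshape{Lin}}\left(\{\eta_{\alpha,n,k}\}\right) = \alpha$. Consequently the STA is of the type to which Theorem \ref{MainResultLowUp} applies, so
\begin{displaymath}
\widetilde{C}_{1/2}\, \widetilde{\textrm{\upshape{Lin}}}_{1/2}\left(\{\eta_{\alpha,n,k}\}\right) \;\leq\; \limsup_{n \rightarrow \infty} K\!\left(\xi,\sum_{k=1}^n \eta_{\alpha,n,k}\right) \;\leq\; \textrm{\upshape{Lin}}\left(\{\eta_{\alpha,n,k}\}\right).
\end{displaymath}

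Next I would replace the right-hand side by $\alpha$, using (\ref{eq:ExplLin}), and the left-hand factor $\widetilde{\textrm{\upshape{Lin}}}_{1/2}(\{\eta_{\alpha,n,k}\})$ by its explicit value furnished by Proposition \ref{thm:ExplRelLin} at $\gamma = 1/2$, namely
\begin{displaymath}
\widetilde{\textrm{\upshape{Lin}}}_{1/2}\left(\{\eta_{\alpha,n,k}\}\right) = \alpha \left(1 - \frac{1 - e^{-\frac{1}{2}(1-\alpha)}}{\frac{1}{2}(1-\alpha)}\right).
\end{displaymath}
Substituting these two identities into the chain of inequalities above yields (\ref{ExplMainIneqs}) verbatim. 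Finally, the numerical bound $\widetilde{C}_{1/2} \geq 0.033$ is inherited unchanged from Theorem \ref{MainResultLowUp}, since the constant there does not depend on the STA under consideration.

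There is no real obstacle here, since both the upper and the lower bound are already formulated in the abstract setting and both quantities have been calculated for this family in closed form. The only mild point worth double-checking is consistency of notation when specializing (\ref{eq:ExplRelLin}) to $\gamma = 1/2$, but this is purely mechanical.
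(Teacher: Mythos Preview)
Your proposal is correct and matches the paper's own argument essentially verbatim: the paper simply states that the result follows by applying (\ref{eq:ExplLin}), (\ref{eq:ExplRelLin}) and (\ref{MainResultIneqs}) to the STA $\{\eta_{\alpha,n,k}\}$, which is precisely the substitution you carry out. There is nothing to add.
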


\paragraph{} From the lower bound for $\displaystyle{\limsup_{n \rightarrow \infty} K\left(\xi,\sum_{k=1}^n \eta_{\alpha,n,k}\right)}$ in (\ref{ExplMainIneqs}) we conclude that the upper bound in (\ref{MainResultIneqs}) is of optimal order in the sense that

\begin{thm} 
There does not exist a constant $C>0$, not depending on $\left\{\xi_{n,k}\right\}$, such that 
\begin{eqnarray}
\limsup_{n \rightarrow \infty} K\left(\xi,\sum_{k=1}^n \xi_{n,k}\right) \leq C \left[\textrm{\upshape{Lin}}\left(\left\{\xi_{n,k}\right\}\right)\right]^{1 + p}
\end{eqnarray}
with $p > 0$.
\end{thm}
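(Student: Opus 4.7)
The strategy is to derive a contradiction from Theorem \ref{thm:ExplMainIneqs} by letting $\alpha \downarrow 0$ in the explicit lower bound for the STA $\{\eta_{\alpha,n,k}\}$. Suppose, for the sake of contradiction, that there do exist a constant $C > 0$ and an exponent $p > 0$, both independent of the particular STA, for which
\begin{displaymath}
\limsup_{n \rightarrow \infty} K\left(\xi,\sum_{k=1}^n \xi_{n,k}\right) \leq C \left[\textrm{\upshape{Lin}}\left(\left\{\xi_{n,k}\right\}\right)\right]^{1+p}
\end{displaymath}
holds for every STA. Applying this inequality to the STA $\{\eta_{\alpha,n,k}\}$ constructed in (\ref{def:sn})--(\ref{def:STAeta2}), and combining it with Proposition \ref{thm:ExplLin} and the lower bound in (\ref{ExplMainIneqs}) from Theorem \ref{thm:ExplMainIneqs}, yields
\begin{displaymath}
\widetilde{C}_{1/2} \, \alpha \left(1 - \frac{1 - e^{-\frac{1}{2}(1-\alpha)}}{\frac{1}{2}(1-\alpha)}\right) \leq C \, \alpha^{1+p}
\end{displaymath}
for every $0 < \alpha < 1$.

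The next step is to analyze the behaviour of the factor $F(\alpha) := 1 - \frac{1 - e^{-\frac{1}{2}(1-\alpha)}}{\frac{1}{2}(1-\alpha)}$ as $\alpha \downarrow 0$. Since $F$ is continuous on $[0,1)$ and $F(0) = 1 - 2(1 - e^{-1/2}) > 0$ (numerically, $F(0) \approx 0.213$), there exists $\alpha_0 \in (0,1)$ and a constant $c_0 > 0$ such that $F(\alpha) \geq c_0$ for all $0 < \alpha \leq \alpha_0$. Consequently, dividing the displayed inequality by $\alpha$, we obtain
\begin{displaymath}
\widetilde{C}_{1/2} \, c_0 \leq C \, \alpha^{p}
\end{displaymath}
for all $0 < \alpha \leq \alpha_0$. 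Letting $\alpha \downarrow 0$, the right-hand side tends to $0$ while the left-hand side is a strictly positive constant, which is the desired contradiction.

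The argument is thus essentially a one-line asymptotic comparison, once Theorem \ref{thm:ExplMainIneqs} is in place. There is no serious obstacle: the only substantive verification is the trivial continuity check that $F(0) > 0$, which is immediate from the strict inequality $1 - e^{-t} < t$ for $t > 0$ applied at $t = 1/2$. In other words, the whole point is that the example $\{\eta_{\alpha,n,k}\}$ realises a Kolmogorov distance linear in the Lindeberg index, so no polynomial improvement of exponent strictly greater than $1$ is possible.
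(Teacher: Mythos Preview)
Your proof is correct and is precisely the argument the paper has in mind: the paper does not give an explicit proof but simply states that the result follows ``from the lower bound for $\displaystyle{\limsup_{n \rightarrow \infty} K\left(\xi,\sum_{k=1}^n \eta_{\alpha,n,k}\right)}$ in (\ref{ExplMainIneqs}),'' and you have written out that deduction in full. Your verification that $F(0)>0$ via $1-e^{-t}<t$ is exactly the elementary check needed to make the limiting comparison rigorous.
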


\paragraph{} We end this paper with the following open question. 

\begin{vrg}\label{LBQ}
Does there exist a constant $C > 0$, not depending on $\left\{\xi_{n,k}\right\}$, such that 
\begin{eqnarray}
\Lin \leq C \limsup_{n \rightarrow \infty} K\left(\xi,\sum_{k=1}^n \xi_{n,k}\right)?
\end{eqnarray}
\end{vrg}

\paragraph{}  A positive answer to Question \ref{LBQ} is equivalent with the existence of a constant $C > 0$, not depending on $\left\{\xi_{n,k}\right\}$,  such that for each $\gamma > 0$
\begin{eqnarray*}
\widetilde{\textrm{\upshape{Lin}}}_{\gamma}\left(\left\{\xi_{n,k}\right\}\right) \leq C \limsup_{n \rightarrow \infty} K\left(\xi,\sum_{k=1}^n \xi_{n,k}\right).
\end{eqnarray*}

\section*{Appendix A : Proof of Theorem \ref{thmBHallIneqour} }

\paragraph{} We assume w.l.o.g. that $\xi$ and $\left\{\xi_{n,k}\right\}$ are independent. Furthermore, we let $\left\{\widetilde{\xi}_{n,k}\right\}$ be a copy of $\left\{\xi_{n,k}\right\}$ not depending on $\xi$ and $\left\{\xi_{n,k}\right\}$.
\\
\paragraph{} Put
\begin{eqnarray}
\psi_{1/2}(x) = 1 - \int_0^1 e^{-\frac{1}{2}(1-s^2)x^2} ds.
\end{eqnarray}
The following lemma reveals that $\psi_{1/2}$ is closely related to $\phi_{1/2}$.

\begin{lemA}
The inequalities
\begin{eqnarray}
\psi_{1/2} \leq \phi_{1/2} \leq \frac{3}{2}\psi_{1/2}\label{PreCompPsi}
\end{eqnarray}
are valid.
\end{lemA}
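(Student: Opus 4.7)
The plan is to reduce both inequalities to elementary one-variable analytic statements in the parameter $a := x^2/2 \geq 0$. Factoring $e^{-a}$ out of the defining integral for $\psi_{1/2}$ gives the convenient reformulation
\[
\phi_{1/2}(x) = 1 - e^{-a}, \qquad \psi_{1/2}(x) = 1 - e^{-a} \int_0^1 e^{as^2}\,ds.
\]

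The lower inequality $\psi_{1/2} \leq \phi_{1/2}$ will then be immediate: since $1 - s^2 \leq 1$ for $s \in [0,1]$, one has $e^{-(1-s^2)a} \geq e^{-a}$ pointwise, and integrating over $[0,1]$ gives $\int_0^1 e^{-(1-s^2)a}\,ds \geq e^{-a}$, whence $\psi_{1/2}(x) \leq \phi_{1/2}(x)$ after subtracting from $1$.

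The upper inequality $\phi_{1/2} \leq \tfrac{3}{2}\psi_{1/2}$ requires a little more work. A routine manipulation (clear the $3/2$, isolate the integral, multiply by $e^a$) shows that it is equivalent to
\[
J(a) := e^a + 2 - 3 \int_0^1 e^{as^2}\,ds \geq 0 \qquad (a \geq 0).
\]
Since $J(0) = 0$, it is enough to check $J'(a) \geq 0$ on $[0,\infty)$. Differentiating under the integral sign yields $J'(a) = e^a - 3 \int_0^1 s^2 e^{as^2}\,ds$, and the crude bound $e^{as^2} \leq e^a$ on $[0,1]$ for $a \geq 0$ gives $\int_0^1 s^2 e^{as^2}\,ds \leq e^a \int_0^1 s^2\,ds = \tfrac{1}{3} e^a$, whence $J'(a) \geq 0$, as required.

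There is no real obstacle here; the only mildly delicate point is the bookkeeping on constants. The factor $\int_0^1 s^2\,ds = 1/3$ is exactly what closes off the estimate $J'(a) \geq 0$, and it is what forces the constant $3/2$ in the statement; correspondingly the inequality is sharp at $a = 0$, where both $\phi_{1/2}$ and $\tfrac{3}{2}\psi_{1/2}$ vanish with matching leading order.
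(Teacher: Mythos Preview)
Your proof is correct. Both you and the paper reduce the upper inequality to the same elementary fact, namely
\[
3\int_0^1 e^{as^2}\,ds \;\leq\; e^a + 2 \qquad (a\geq 0),
\]
but you establish it differently. You note $J(0)=0$ and show $J'(a)\geq 0$ via the crude bound $e^{as^2}\leq e^a$ together with $\int_0^1 s^2\,ds=\tfrac13$. The paper instead works directly with $\phi_{1/2}-\psi_{1/2}$: it writes $e^{as^2}-1=\int_0^1 as^2 e^{tas^2}\,dt$, bounds $t$ by $1$, and then integrates by parts in $s$ to recognise the result as $\tfrac12\psi_{1/2}$. The paper's route never differentiates in $a$ and makes the structural reason for the constant $\tfrac32$ visible through the identity $\int_0^1 s\,de^{as^2}=e^a-\int_0^1 e^{as^2}\,ds$; your route is arguably more mechanical and transparent, and your closing remark about sharpness at $a=0$ is a nice touch the paper omits. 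Either argument is perfectly adequate here.
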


\begin{proof}
The first inequality is obvious. For the second inequality, we observe that
\begin{eqnarray*}
\phi_{1/2}(x) - \psi_{1/2}(x) &=& e^{-\frac{1}{2}x^2} \int_0^1 \left(e^{\frac{1}{2} s^2 x^2} - e^0\right) ds\\
&=& \frac{1}{2} e^{-\frac{1}{2} x^2} \int_0^1 \int_0^1 s^2 x^2  e^{\frac{1}{2} t s^2 x^2} dt ds\\
&\leq& \frac{1}{2} e^{-\frac{1}{2} x^2} \int_0^1 s^2 x^2 e^{\frac{1}{2} s^2 x^2} ds\\
&=& \frac{1}{2} e^{-\frac{1}{2} x^2} \int_0^1 s de^{\frac{1}{2}s^2 x^2}\\
&=& \frac{1}{2} e^{-\frac{1}{2} x^2} \left(e^{\frac{1}{2}x^2} - \int_0^1 e^{\frac{1}{2} s^2 x^2} ds\right)\\
&=& \frac{1}{2} \psi_{1/2}(x)
\end{eqnarray*}
and we are done.
\end{proof}

\paragraph{} It follows from (\ref{PreCompPsi}) that $\psi_{1/2}$ belongs to $\Phi_L$. Also,

\begin{gevA}
\begin{eqnarray}
\widetilde{\textrm{\upshape{Lin}}}_{\psi_{1/2}}\left(\left\{\xi_{n,k}\right\}\right) \leq \widetilde{\textrm{\upshape{Lin}}}_{1/2}\left(\left\{\xi_{n,k}\right\}\right) \leq \frac{3}{2}\widetilde{\textrm{\upshape{Lin}}}_{\psi_{1/2}}\left(\left\{\xi_{n,k}\right\}\right).\label{CompPsi}
\end{eqnarray}
\end{gevA}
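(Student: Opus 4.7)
The plan is to derive the corollary as an immediate consequence of the pointwise inequalities $\psi_{1/2}(x)\leq \phi_{1/2}(x)\leq \frac{3}{2}\psi_{1/2}(x)$ furnished by the preceding lemma, combined with the monotonicity of expectation, finite summation, and the limit superior. No extra ingredients are required beyond the definition
$\widetilde{\textrm{\upshape{Lin}}}_{\phi}\left(\left\{\xi_{n,k}\right\}\right) = \limsup_{n} \sum_{k=1}^n \mathbb{E}\left[\xi_{n,k}^2 \phi(|\xi_{n,k}|)\right]$
and the identity $\widetilde{\textrm{\upshape{Lin}}}_{1/2} = \widetilde{\textrm{\upshape{Lin}}}_{\phi_{1/2}}$.

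The concrete steps I would carry out are, in order: first, substitute $x=|\xi_{n,k}|$ into (\ref{PreCompPsi}) to lift the inequality to the random-variable level, and then multiply throughout by the nonnegative factor $\xi_{n,k}^2$, obtaining almost surely
$$\xi_{n,k}^2 \,\psi_{1/2}(|\xi_{n,k}|) \;\leq\; \xi_{n,k}^2 \,\phi_{1/2}(|\xi_{n,k}|) \;\leq\; \tfrac{3}{2}\,\xi_{n,k}^2 \,\psi_{1/2}(|\xi_{n,k}|).$$
Second, take expectations, which preserves the ordering because every integrand is nonnegative, and sum the resulting inequalities over $k=1,\ldots,n$. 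Third, apply $\limsup_{n\to\infty}$ to each side; since the $\limsup$ respects $\leq$ and the positive constant $\tfrac{3}{2}$ factors out cleanly, (\ref{CompPsi}) is obtained on invoking $\widetilde{\textrm{\upshape{Lin}}}_{1/2} = \widetilde{\textrm{\upshape{Lin}}}_{\phi_{1/2}}$.

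There is no genuine obstacle: the argument is a formal monotonicity transfer of the function-level comparison (\ref{PreCompPsi}) to the level of relaxed Lindeberg indices. The point of isolating this corollary is strategic rather than technical: it will let the forthcoming proof of Theorem \ref{thmBHallIneqour} work internally with $\widetilde{\textrm{\upshape{Lin}}}_{\psi_{1/2}}$ — which is the quantity that arises naturally when one feeds the kernel $\psi_{1/2}$ into a Stein-type computation, because $\psi_{1/2}$ has the convenient integral representation $1-\int_0^1 e^{-\frac{1}{2}(1-s^2)x^2}\,ds$ — while still delivering the final conclusion in the cleaner form (\ref{BHallIneq2our}) involving $\widetilde{\textrm{\upshape{Lin}}}_{1/2}$.
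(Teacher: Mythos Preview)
Your proposal is correct and matches the paper's approach exactly: the paper states the corollary without a separate proof, treating it as an immediate consequence of the pointwise comparison (\ref{PreCompPsi}) via precisely the monotonicity transfer you describe. There is nothing to add.
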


\begin{lemA}
Let $f : \mathbb{R} \rightarrow \mathbb{R}$ be bounded, measurable and antisymmetric and put 
\begin{eqnarray}
g(x) = xf(x). 
\end{eqnarray}
Then, for $a \in \mathbb{R}$,
\begin{eqnarray}
\mathbb{E}\left[a^2 \int_0^1 g\left(\xi + sa\right) e^{-\frac{1}{2}\left(1-s^2\right)a^2} ds\right] = \mathbb{E}\left[a f(\xi + a)\right].\label{basicRelation}
\end{eqnarray}
\end{lemA}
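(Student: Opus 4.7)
The plan is to interpret the integrand on the left-hand side of (\ref{basicRelation}) as an exact $s$-derivative, so that the fundamental theorem of calculus collapses the integral into a boundary term. Set
\begin{displaymath}
F(s) = a\,\mathbb{E}[f(\xi + sa)] = a \int_{\mathbb{R}} f(y)\, \phi(y - sa)\, dy,
\end{displaymath}
where $\phi$ denotes the standard normal density. Since $f$ is bounded and $\phi$ is smooth with Gaussian decay, differentiation under the integral sign is legitimate and, using $\phi'(u) = -u\phi(u)$, yields
\begin{displaymath}
F'(s) = a^2 \int_{\mathbb{R}} (y - sa)\, f(y)\, \phi(y - sa)\, dy = a^2\,\mathbb{E}[\xi\, f(\xi + sa)]
\end{displaymath}
after the substitution $x = y - sa$.

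\paragraph{} Decomposing $g(\xi + sa) = \xi\, f(\xi + sa) + sa\, f(\xi + sa)$ and combining it with the previous display gives the algebraic identity
\begin{displaymath}
a^2\,\mathbb{E}[g(\xi + sa)] = F'(s) + a^2 s\, F(s).
\end{displaymath}
Noting that $\tfrac{d}{ds} e^{-\frac{1}{2}(1-s^2)a^2} = a^2 s\, e^{-\frac{1}{2}(1-s^2)a^2}$, the product rule now recognizes the crucial fact
\begin{displaymath}
a^2\,\mathbb{E}[g(\xi + sa)]\, e^{-\frac{1}{2}(1-s^2)a^2} = \frac{d}{ds}\!\left[F(s)\, e^{-\frac{1}{2}(1-s^2)a^2}\right].
\end{displaymath}

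\paragraph{} After exchanging expectation with the $s$-integral on the left-hand side of (\ref{basicRelation}) via Fubini---legitimate because $|g(\xi + sa)| \leq (|\xi| + |a|)\,\|f\|_\infty$ is uniformly integrable in $s \in [0,1]$---the fundamental theorem of calculus then yields
\begin{displaymath}
\mathbb{E}\!\left[a^2 \int_0^1 g(\xi + sa)\, e^{-\frac{1}{2}(1-s^2)a^2}\, ds\right] = F(1) - F(0)\, e^{-\frac{1}{2}a^2}.
\end{displaymath}
The antisymmetry of $f$ together with the symmetry of the distribution of $\xi$ forces $F(0) = a\,\mathbb{E}[f(\xi)] = 0$, so the boundary expression collapses to $F(1) = \mathbb{E}[a\, f(\xi + a)]$, which is exactly the right-hand side of (\ref{basicRelation}); the case $a = 0$ is trivial. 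I expect no genuine obstacle: the only subtle points are the Fubini interchange and the differentiation under the integral sign, both of which reduce to routine domination by integrable Gaussian expressions thanks to the boundedness of $f$.
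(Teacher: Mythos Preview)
Your argument is correct and is in spirit the same as the paper's: both split $g(\xi+sa)=\xi f(\xi+sa)+sa\,f(\xi+sa)$, exploit the Gaussian identity relating $\phi$ and $\phi'$, and then identify an exact $s$-derivative so that the integral collapses to the boundary term $F(1)$ (with $F(0)=0$ by antisymmetry). The difference is in how the Gaussian identity is invoked. The paper applies Stein's lemma $\mathbb{E}[\xi h(\xi)]=\mathbb{E}[h'(\xi)]$ in the $x$-variable, which forces $f$ to be continuously differentiable; the resulting $f'$-terms then cancel after an integration by parts in $s$, and the general case is recovered by an approximation argument. You instead differentiate $s\mapsto\mathbb{E}[f(\xi+sa)]$ under the integral sign, so the needed smoothness is supplied by $\phi$ rather than by $f$, and no approximation step is needed at the end. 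Your route is thus a touch more direct; the paper's route makes the cancellation mechanism between the two halves of the split more visible.
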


\begin{proof}
First suppose that $f$ is continuously differentiable. Then
\begin{eqnarray*}
\lefteqn{\mathbb{E}\left[a^2 \int_0^1 g(\xi + sa) e^{-\frac{1}{2} (1-s^2) a^2} ds\right]}\\
&=& \mathbb{E}\left[a^2 \int_0^1 \xi f(\xi + sa) e^{-\frac{1}{2} (1-s^2) a^2}ds\right] \\
&&+ \mathbb{E}\left[a^2 \int_0^1 sa f(\xi + sa) e^{-\frac{1}{2} (1-s^2) a^2} ds \right]\\
&=&  \mathbb{E}\left[a^2 \int_0^1f^\prime(\xi + sa) e^{-\frac{1}{2} (1-s^2) a^2}ds\right]\\
&& + \mathbb{E}\left[a \int_0^1 f(\xi + sa) d\left[e^{\frac{1}{2} s^2 a^2}\right] e^{-\frac{1}{2} a^2}\right]\\
&=& \mathbb{E}\left[a^2 \int_0^1f^\prime(\xi + sa) e^{-\frac{1}{2} (1-s^2) a^2}ds\right]\\
&& +\left(\mathbb{E}\left[a f(\xi + a)\right] - \mathbb{E}\left[a^2 \int_0^1f^\prime(\xi + sa) e^{-\frac{1}{2} (1-s^2) a^2}ds\right]\right)\\
&=& \mathbb{E}\left[a f(\xi + a)\right],
\end{eqnarray*}
where the second equality follows from the fact that, for $h : \mathbb{R} \rightarrow \mathbb{R}$ continuously differentiable, 
\begin{eqnarray}
\mathbb{E}\left[\xi h(\xi)\right] = \mathbb{E}\left[h^\prime(\xi)\right],
\end{eqnarray}
which is seen by performing an integration by parts. Now some standard approximation procedures allow us to drop the condition that $f$ be continuously differentiable.
\end{proof}

\begin{lemA}
Let $\mu$ be a symmetric probability distribution on the real line with Fourier transform
\begin{eqnarray}
\widehat{\mu}(t) = \int_{-\infty}^\infty e^{-itx}d\mu(x).
\end{eqnarray}
Then, for $a \in \mathbb{R}$,
\begin{eqnarray}
\mathbb{E}\left[\widehat{\mu}(\xi + a)\right] = \int_{-\infty}^\infty e^{-\frac{1}{2}x^2} \cos(ax) d\mu(x).\label{FourTransl}
\end{eqnarray}
\end{lemA}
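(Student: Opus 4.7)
The plan is to unwind the definition of $\widehat{\mu}$, interchange the order of integration using Fubini's theorem (legitimate because $|e^{-it(\xi+a)}| = 1$ is integrable against the product of the standard Gaussian and the probability measure $\mu$), evaluate the resulting Gaussian integral as the characteristic function of $\xi$, and finally use the symmetry of $\mu$ to discard the imaginary part.

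More concretely, I would start from
\begin{eqnarray*}
\mathbb{E}\left[\widehat{\mu}(\xi + a)\right] = \mathbb{E}\left[\int_{-\infty}^{\infty} e^{-i(\xi + a)x}\, d\mu(x)\right] = \int_{-\infty}^{\infty} e^{-iax}\, \mathbb{E}\left[e^{-i\xi x}\right] d\mu(x),
\end{eqnarray*}
where the swap is permitted by Fubini. Since $\xi$ is standard normal, its characteristic function is $\mathbb{E}\left[e^{-i\xi x}\right] = e^{-x^2/2}$, and hence
\begin{eqnarray*}
\mathbb{E}\left[\widehat{\mu}(\xi + a)\right] = \int_{-\infty}^\infty e^{-iax}\, e^{-x^2/2}\, d\mu(x) = \int_{-\infty}^\infty \left[\cos(ax) - i \sin(ax)\right] e^{-x^2/2}\, d\mu(x).
\end{eqnarray*}

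To conclude, I would invoke the symmetry of $\mu$: since $x \mapsto \sin(ax) e^{-x^2/2}$ is an odd function and $\mu$ is symmetric about $0$, the imaginary part vanishes, yielding precisely the claimed identity (\ref{FourTransl}). There is no real obstacle here — the only point that requires a brief justification is the application of Fubini, which is immediate from the boundedness of $e^{-i(\xi+a)x}$ together with the fact that $\xi$ has finite expectation of its modulus (indeed, the integrand is bounded by $1$, so the product measure is integrable on $\mathbb{R} \times \Omega$).
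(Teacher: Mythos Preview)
Your argument is correct and is exactly the standard computation the paper has in mind; indeed, the paper's own proof consists of the single sentence ``This is standard.'' Your write-up simply spells out that standard route (Fubini, Gaussian characteristic function, symmetry of $\mu$), so there is nothing to add.
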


\begin{proof}
This is standard.
\end{proof}

\begin{lemA}
Let $h : \mathbb{R} \rightarrow \mathbb{R}$ be bounded with a piecewise continuous derivative.
Then, for random variables $\eta, \eta^\prime$, 
\begin{eqnarray}
\left|\mathbb{E}\left[h(\eta) - h\left(\eta^\prime\right)\right]\right| \leq \left(\int_{-\infty}^\infty \left|h^\prime (x) \right| dx\right) K\left(\eta,\eta^\prime\right).\label{AbsContEst}
\end{eqnarray} 
\end{lemA}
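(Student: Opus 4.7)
The plan is to reduce everything to an identity of the form $\mathbb{E}[h(\eta)-h(\eta')]=\int h'(t)(F_{\eta'}(t)-F_\eta(t))\,dt$, which then yields the result by brute estimation. I may assume without loss of generality that $\int_{-\infty}^\infty|h'(x)|\,dx<\infty$, since otherwise the right-hand side of (\ref{AbsContEst}) is already infinite (or $K(\eta,\eta')=0$, in which case $\eta$ and $\eta'$ have the same law and both sides vanish). Under this assumption the piecewise continuous derivative $h'$ is Lebesgue integrable, so the limit $h(-\infty):=\lim_{x\to-\infty}h(x)$ exists and one has the representation
\begin{equation*}
h(x)=h(-\infty)+\int_{-\infty}^{x}h'(t)\,dt\qquad(x\in\mathbb{R}).
\end{equation*}

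Denoting the distribution functions by $F_\eta$ and $F_{\eta'}$, I would then plug this representation into $\mathbb{E}[h(\eta)]=\int h(x)\,dF_\eta(x)$ and swap the order of integration via Fubini (justified by $|h'|\in L^1(\mathbb{R})$ and $dF_\eta$ being a probability measure) to obtain
\begin{equation*}
\mathbb{E}[h(\eta)]=h(-\infty)+\int_{-\infty}^{\infty}h'(t)\bigl(1-F_\eta(t)\bigr)\,dt,
\end{equation*}
and analogously for $\eta'$. Subtracting these two identities, the constant $h(-\infty)$ and the $1$'s cancel, and I am left with
\begin{equation*}
\mathbb{E}\bigl[h(\eta)-h(\eta')\bigr]=\int_{-\infty}^\infty h'(t)\bigl(F_{\eta'}(t)-F_\eta(t)\bigr)\,dt.
\end{equation*}

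From here the conclusion is immediate: the triangle inequality gives
\begin{equation*}
\bigl|\mathbb{E}[h(\eta)-h(\eta')]\bigr|\leq\int_{-\infty}^\infty|h'(t)|\,\bigl|F_{\eta'}(t)-F_\eta(t)\bigr|\,dt\leq K(\eta,\eta')\int_{-\infty}^\infty|h'(t)|\,dt,
\end{equation*}
using $\sup_t|F_{\eta'}(t)-F_\eta(t)|=K(\eta,\eta')$ by definition. The only mildly delicate point is to verify that Fubini applies despite $h'$ being merely piecewise continuous; this is routine since $|h'|$ is dominated by an integrable function and $dF_\eta$, $dF_{\eta'}$ are finite, so there is no real obstacle and the whole proof is essentially a one-line integration by parts once the boundary behavior of $h$ has been pinned down.
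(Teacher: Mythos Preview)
Your proof is correct and follows the same approach as the paper: the paper's proof consists of the single instruction ``perform an integration by parts on the left side,'' and what you have written is precisely that integration by parts spelled out in detail (your Fubini step is the standard way to justify the Lebesgue--Stieltjes integration by parts here). There is no meaningful difference between the two arguments.
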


\begin{proof}
Perform an integration by parts on the left side of (\ref{AbsContEst}).
\end{proof}

\begin{thmA}\label{AbstractThmLowerBound}
Let $\mu$ be a symmetric probability distribution on the real line, put 
\begin{eqnarray}
f(x) = \left\{\begin{array}{clrr}      
\frac{1 - \widehat{\mu}(x)}{x} 	& \textrm{ if }& x \neq 0\\       
0 & \textrm{ if }& x = 0
\end{array}\right.
\end{eqnarray}
and suppose that $f$ has a bounded second derivative and a bounded and piecewise continuous third derivative. Then
\begin{eqnarray}
\lefteqn{\left(\int_{-\infty}^\infty \left[1 - \left(1 + \frac{1}{2}x^2\right) e^{-\frac{1}{2} x^2}\right] d\mu(x)\right) \widetilde{\textrm{\upshape{Lin}}}_{\psi_{1/2}}\left(\left\{\xi_{n,k}\right\}\right)}\label{abstrIneq}\\
&\leq& \left(\int_{-\infty}^\infty \left|\left(\widehat{\mu}\right)^\prime(x)\right| dx + \int_{-\infty}^\infty \left|f^{\prime \prime}(x)\right| dx\right) \limsup_{n \rightarrow \infty} K\left(\xi,\sum_{k=1}^n \xi_{n,k}\right).\nonumber
\end{eqnarray}
\end{thmA}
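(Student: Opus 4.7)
The plan is to sandwich the Stein-type discrepancy
$$\Delta_n := \mathbb{E}\bigl[S_n f(S_n) - f'(S_n)\bigr], \qquad S_n := \sum_{k=1}^n \xi_{n,k},$$
from above by $(\|(\widehat{\mu})'\|_1 + \|f''\|_1)\, K(\xi, S_n)$ and from below by $C_\mu \sum_k \mathbb{E}\bigl[\xi_{n,k}^2 \psi_{1/2}(|\xi_{n,k}|)\bigr]$ up to a vanishing remainder, where $C_\mu = \int [1 - (1+t^2/2)e^{-t^2/2}]d\mu(t)$. Taking $\limsup$ will then produce (\ref{abstrIneq}).

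For the upper bound on $|\Delta_n|$, I would invoke Stein's identity $\mathbb{E}[\xi f(\xi)] = \mathbb{E}[f'(\xi)]$ for the standard normal $\xi$. Since $g(x) := xf(x) = 1 - \widehat{\mu}(x)$, this yields $\mathbb{E}[g(\xi) - f'(\xi)] = 0$, whence
$$\Delta_n = \mathbb{E}\bigl[(g - f')(S_n) - (g - f')(\xi)\bigr].$$
The derivative $(g - f')' = -(\widehat{\mu})' - f''$ has $L^1$-norm at most $\|(\widehat{\mu})'\|_1 + \|f''\|_1$, so Lemma~\ref{AbsContEst} immediately delivers $|\Delta_n| \leq (\|(\widehat{\mu})'\|_1 + \|f''\|_1)\, K(\xi, S_n)$.

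For the lower bound I would apply the basic relation~(\ref{basicRelation}) with $a = \xi_{n,k}$ (independent of $\xi$) and sum over $k$. Decomposing $g = 1 - \widehat{\mu}$ and using the identities $\int_0^1 e^{-(1-s^2)a^2/2}ds = 1 - \psi_{1/2}(|a|)$ and $\sum_k \sigma_{n,k}^2 = 1$, one arrives at the exact identity
$$\sum_k \mathbb{E}\bigl[\xi_{n,k}^2 \psi_{1/2}(|\xi_{n,k}|)\bigr] = 1 - \sum_k A_{n,k} - \sum_k B_{n,k},$$
where $A_{n,k} := \mathbb{E}\bigl[\xi_{n,k}^2 \int_0^1 \widehat{\mu}(\xi + s\xi_{n,k})e^{-(1-s^2)\xi_{n,k}^2/2}ds\bigr]$ and $B_{n,k} := \mathbb{E}[\xi_{n,k} f(\xi + \xi_{n,k})]$. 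I would then use~(\ref{FourTransl}) to convert $A_{n,k}$ into a $\mu$-integral of cosine-weighted Gaussian expressions, and the Stein ODE $F'(a) + aF(a) = 1 - \int e^{-t^2/2}\cos(at)d\mu(t)$ satisfied by $F(a) := \mathbb{E}[f(\xi + a)]$ to expand $B_{n,k}$. The zeroth-order contributions cancel because $\mathbb{E}[\widehat{\mu}(\xi)] + \mathbb{E}[f'(\xi)] = 1$ (itself a consequence of Stein's identity); a direct algebraic manipulation then shows that the fourth-order contributions recombine into exactly $C_\mu \sum_k \mathbb{E}[\xi_{n,k}^2 \psi_{1/2}(|\xi_{n,k}|)]$ matched against $\Delta_n$ (the leading behaviour $\psi_{1/2}(|a|) = a^2/3 + O(a^4)$ being the key).

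The main obstacle will be this last step: organising the fourth-order Taylor bookkeeping in $\xi_{n,k}$, isolating the coefficient $C_\mu$ by means of the Fourier-analytic identities above, and verifying uniform control of the higher-order residuals through Feller's condition $\max_k \sigma_{n,k}^2 \to 0$ together with the boundedness hypotheses on $f''$ and $f'''$. Once this is accomplished, combining the two bounds on $\Delta_n$ and taking $\limsup$ yields (\ref{abstrIneq}).
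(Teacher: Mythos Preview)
Your upper bound on $\Delta_n=\mathbb{E}[S_nf(S_n)-f'(S_n)]$ is correct and in fact slightly slicker than the paper's argument: once you observe that $g-f'$ has derivative $-(\widehat\mu)'-f''$, Lemma~5 gives $|\Delta_n|\le(\|(\widehat\mu)'\|_1+\|f''\|_1)K(\xi,S_n)$ in one stroke, whereas the paper splits this into two separate pieces $\gamma_n$ and $\delta_{n,k}$.

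The lower bound, however, has a real gap. Applying (\ref{basicRelation}) with $a=\xi_{n,k}$ yields quantities $A_{n,k}$ and $B_{n,k}=\mathbb{E}[\xi_{n,k}f(\xi+\xi_{n,k})]$ that live on the two-variable convolution $\xi+\xi_{n,k}$, not on the full sum $S_n$. There is no ``direct algebraic manipulation'' that turns $\sum_k B_{n,k}$ into $\Delta_n$: the Stein ODE for $F(a)=\mathbb{E}[f(\xi+a)]$ relates $F'$ to $F$ and to $\widehat\mu$, but never introduces $S_n$. Likewise, the constant $C_\mu=\int[1-(1+\tfrac12 x^2)e^{-x^2/2}]\,d\mu(x)$ does \emph{not} emerge from a fourth-order expansion of $\psi_{1/2}$; in the paper it appears simply as $\mathbb{E}[1-\widehat\mu(\xi)]$ minus the coefficient $\int\tfrac12 x^2e^{-x^2/2}\,d\mu(x)$ produced by the $1-\cos$ estimate on the $\beta_{n,k}$ term. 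Your Taylor-in-$\xi_{n,k}$ scheme would also require third moments of $\xi_{n,k}$, which are not assumed.

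The idea you are missing is the paper's use of an \emph{independent copy} $\{\widetilde\xi_{n,k}\}$. Writing $S_n^{(k)}=\sum_{i\ne k}\xi_{n,i}$, the key observation is that $S_n^{(k)}+\widetilde\xi_{n,k}$ has the same law as $S_n$ but is independent of $\xi_{n,k}$. This lets the paper insert, inside the expectation $\mathbb{E}[\xi_{n,k}^2\,\cdot\,]$, the comparison
\[
\delta_{n,k}=\int_0^1\Bigl[f'\bigl(s\xi_{n,k}+S_n^{(k)}+\widetilde\xi_{n,k}\bigr)-f'\bigl(s\xi_{n,k}+\xi\bigr)\Bigr]\,ds,
\]
which is exactly a difference of $f'$ evaluated at two random variables whose conditional laws (given $\xi_{n,k}$) are those of $S_n$ and of $\xi$, shifted by the same amount. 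That is what makes Lemma~5 applicable and produces the factor $K(\xi,S_n)$. The paper then assembles the exact identity
\[
\sum_k\mathbb{E}\bigl[\xi_{n,k}^2(1-\widehat\mu(\xi))\,\psi_{1/2}(|\xi_{n,k}|)\bigr]
=\sum_k\mathbb{E}\bigl[\xi_{n,k}^2(\beta_{n,k}+\gamma_n+\delta_{n,k}+\epsilon_{n,k})\bigr],
\]
bounds $\beta_{n,k}$ by the $\tfrac12 x^2e^{-x^2/2}$ term (moved to the left to yield $C_\mu$), bounds $\gamma_n$ and $\delta_{n,k}$ via Lemma~5, and controls the second-order Taylor remainder $\epsilon_{n,k}$ by $\tfrac12\|f'''\|_\infty\sum_k\sigma_{n,k}^4\to 0$ under Feller's condition. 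No fourth-order bookkeeping is needed. To repair your argument you would need to introduce this independent-copy device (or an equivalent mechanism) to bridge $\xi+\xi_{n,k}$ to $S_n$.
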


\begin{proof}
Put 
\begin{eqnarray}
g(x) = xf(x) 
\end{eqnarray}
and 
\begin{eqnarray}
\beta_{n,k} &=& \int_0^1 \left[\widehat{\mu}\left(\xi\right) - \widehat{\mu}\left(\xi + s\xi_{n,k}\right)\right] e^{-\frac{1}{2} (1-s^2) \xi_{n,k}^2} ds,\\
\gamma_{n} &=& g(\xi) - g\left(\sum_{k=1}^n \xi_{n,k}\right),\\
\delta_{n,k} &=& \int_0^1 \left[f^\prime\left(s\xi_{n,k} + \sum_{i \neq k} \xi_{n,i} + \widetilde{\xi}_{n,k}\right) - f^\prime\left(s \xi_{n,k} + \xi\right)\right] ds,\\
\epsilon_{n,k} &=& \int_{0}^1 \left[f^\prime\left(\sum_{i \neq k} \xi_{n,i} + s \xi_{n,k}\right) - f^\prime\left(\sum_{i \neq k} \xi_{n,i} + s\xi_{n,k} + \widetilde{\xi}_{n,k}\right)\right.\\
&& + \left.\widetilde{\xi}_{n,k} f^{\prime \prime}\left(\sum_{i \neq k} \xi_{n,i} + s \xi_{n,k}\right) \right]ds.\nonumber
\end{eqnarray}
Then
\begin{eqnarray}
\sum_{k=1}^n \mathbb{E}\left[\xi_{n,k}^2 \left(1-\widehat{\mu}(\xi)\right)\psi_{1/2}\left(\left|\xi_{n,k}\right|\right)\right]  = \sum_{k=1}^n \mathbb{E}\left[\xi_{n,k}^2 \left(\beta_{n,k} + \gamma_n + \delta_{n,k} + \epsilon_{n,k}\right)\right]
\end{eqnarray}
which is seen by calculating the right side applying independence, the fact that $\mathbb{E}\left[\widetilde{\xi}_{n,k}\right] = \mathbb{E}\left[\xi_{n,k}\right] = 0$ and (\ref{basicRelation}).\\
It follows from (\ref{FourTransl}) and the inequality $\displaystyle{1 - \cos(a) \leq \frac{1}{2} a^2}$ that 
\begin{eqnarray}
\lefteqn{\sum_{k=1}^n \mathbb{E}\left[\xi_{n,k}^2 \beta_{n,k}\right]}\\
&=& \int_{-\infty}^\infty e^{-\frac{1}{2}x^2} \sum_{k=1}^n \mathbb{E} \left[\xi_{n,k}^2 \int_0^1 \left[1 - \cos(s \xi_{n,k} x)\right] e^{-\frac{1}{2} (1-s^2) \xi_{n,k}^2}ds\right] d\mu(x)\nonumber\\
&\leq& \left(\int_{-\infty}^\infty \frac{1}{2} x^2 e^{-\frac{1}{2} x^2} d\mu(x)\right) \sum_{k=1}^n\mathbb{E}\left[\xi_{n,k}^2 \int_0^1 s^2 \xi_{n,k}^2 e^{-\frac{1}{2} (1 - s^2) \xi_{n,k}^2}ds\right]\nonumber\\
&=& \left(\int_{-\infty}^\infty \frac{1}{2}x^2 e^{-\frac{1}{2} x^2} d\mu(x)\right) \sum_{k=1}^n \mathbb{E}\left[\xi_{n,k}^2 \psi_{1/2}\left(\left|\xi_{n,k}\right|\right)\right].\nonumber
\end{eqnarray}
From (\ref{AbsContEst}) we learn that
\begin{eqnarray}
\sum_{k=1}^n \mathbb{E}\left[\xi_{n,k}^2 \gamma_{n}\right] \leq  \left(\int_{-\infty}^\infty \left|\left(\widehat{\mu}\right)^{\prime}(x)\right| dx\right) K\left(\xi,\sum_{k=1}^n \xi_{n,k}\right)
\end{eqnarray}
and
\begin{eqnarray}
\sum_{k=1}^n\mathbb{E}\left[\xi_{n,k}^2 \delta_{n,k}\right] &\leq& \left(\int_{-\infty}^\infty \left|f^{\prime \prime}(x)\right|dx\right) K\left(\xi,\sum_{k=1}^n \xi_{n,k}\right).
\end{eqnarray}
Finally, (\ref{Taylorf}) reveals that
\begin{eqnarray}
\sum_{k=1}^n \mathbb{E}\left[\xi_{n,k}^2 \epsilon_{n,k}\right] \leq \frac{1}{2} \left\|f^{\prime \prime \prime}\right\|_\infty \sum_{k=1}^n \sigma_{n,k}^4.
\end{eqnarray}
Now the lemma follows from (\ref{FelNegCond}).
\end{proof}

\begin{gevA}
There exists a constant $C_{\psi_{1/2}}$, not depending on $\left\{\xi_{n,k}\right\}$, such that 
\begin{eqnarray}
\widetilde{\textrm{\upshape{Lin}}}_{\psi_{1/2}}\left(\left\{\xi_{n,k}\right\}\right) \leq C_{\psi_{1/2}} \limsup_{n \rightarrow \infty} K\left(\xi,\sum_{k=1}^n \xi_{n,k}\right).\label{psiIneq}
\end{eqnarray}
Moreover, (\ref{psiIneq}) can be shown to hold with $\displaystyle{C_{\psi_{1/2}} \leq 20.2}$.
\end{gevA}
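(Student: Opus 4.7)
The plan is to invoke Theorem \ref{AbstractThmLowerBound} for a judicious choice of symmetric probability distribution $\mu$ on $\mathbb{R}$ and then to optimize the resulting numerical constant. Setting
\begin{eqnarray*}
c_\mu &=& \int_{-\infty}^{\infty}\left[1 - \left(1 + \tfrac{1}{2}x^2\right)e^{-x^2/2}\right] d\mu(x),\\
A_\mu &=& \int_{-\infty}^{\infty}\left|(\widehat{\mu})^\prime(x)\right|\,dx,\\
B_\mu &=& \int_{-\infty}^{\infty}\left|f^{\prime\prime}(x)\right|\,dx,
\end{eqnarray*}
where $f(x)=(1-\widehat{\mu}(x))/x$ extended by $f(0)=0$, Theorem \ref{AbstractThmLowerBound} gives
\begin{eqnarray*}
c_\mu\cdot\widetilde{\textrm{\upshape{Lin}}}_{\psi_{1/2}}\left(\left\{\xi_{n,k}\right\}\right) \leq (A_\mu + B_\mu)\cdot\limsup_{n\to\infty}K\left(\xi,\sum_{k=1}^n \xi_{n,k}\right),
\end{eqnarray*}
so the corollary reduces to exhibiting one admissible $\mu$ with $(A_\mu + B_\mu)/c_\mu \leq 20.2$.

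First I would select a convenient one-parameter family of candidates for $\mu$ whose Fourier transforms are smooth enough that $f$ has the regularity required by Theorem \ref{AbstractThmLowerBound} (a bounded second derivative and a bounded, piecewise continuous third derivative) and whose decay at infinity makes $A_\mu$ and $B_\mu$ finite. The scaled Gaussians $\mu_\sigma=\mathcal{N}(0,\sigma^2)$ are a natural choice: $\widehat{\mu_\sigma}(x)=e^{-\sigma^2 x^2/2}$ is super-exponentially decaying, which ensures at once that $f\in C^\infty$ (the apparent singularity of $(1-\widehat{\mu_\sigma}(x))/x$ at $0$ is removable by Taylor expansion) and that all derivatives of $f$ decay rapidly.

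Next I would evaluate the three quantities $c_{\mu_\sigma}$, $A_{\mu_\sigma}$, $B_{\mu_\sigma}$ as explicit functions of $\sigma$. The quantity $c_{\mu_\sigma}$ reduces via standard Gaussian moment formulas to a simple algebraic expression in $\sigma$; one computes $A_{\mu_\sigma}=\int|x|e^{-\sigma^2x^2/2}dx = 2/\sigma^2$ by a direct integration; and $B_{\mu_\sigma}$ is obtained by differentiating $f$ twice, verifying boundedness near $0$ by Taylor expansion, and noting that the individual terms of $f^{\prime\prime}$ behave either like Gaussians times polynomials or like $1/x^3$ at infinity, so each contribution to $B_{\mu_\sigma}$ reduces to an elementary integral. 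One then minimizes $\sigma\mapsto(A_{\mu_\sigma}+B_{\mu_\sigma})/c_{\mu_\sigma}$, in closed form if possible and otherwise numerically, to produce an explicit admissible $\sigma$ witnessing the claimed bound.

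The main obstacle is quantitative rather than structural: pinning down the constant $20.2$ requires a delicate balance, since shrinking the scale of $\mu$ enlarges $c_\mu$ but simultaneously inflates $A_\mu$ and $B_\mu$. The step most likely to require care is the exact evaluation of $B_\mu$ and the subsequent scale optimization. Should the Gaussian family not suffice, a fallback is the symmetric triangular family (whose $\widehat{\mu}$ is a squared sinc), which trades some smoothness of $\widehat{\mu}$ for a sharper trade-off in the relevant integrals. Once an explicit admissible $\mu$ is in hand, the corollary follows from a routine numerical check.
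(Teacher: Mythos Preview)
Your approach is essentially identical to the paper's: apply Theorem~\ref{AbstractThmLowerBound} to the Gaussian family $\mu_\sigma=\mathcal{N}(0,\sigma^2)$, compute the three integrals as functions of $\sigma$, and optimize (the paper settles on $\sigma=1.7$, yielding $C_{\psi_{1/2}}\approx 20.19$). One slip to fix before you carry out the numerics: since $(\widehat{\mu_\sigma})'(x)=-\sigma^2 x\,e^{-\sigma^2 x^2/2}$, you have $A_{\mu_\sigma}=\sigma^2\int_{-\infty}^{\infty}|x|e^{-\sigma^2 x^2/2}\,dx=2$, not $2/\sigma^2$; this constant value is what makes the optimization land near $\sigma=1.7$ rather than pushing $\sigma$ to be large.
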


\begin{proof}
Let $\mu_\sigma$ be normal with mean 0 and variance $\sigma^2$ and put
\begin{eqnarray}
f_\sigma(x) = \frac{1 - \widehat{\mu}_\sigma(x)}{x}.
\end{eqnarray}
Then
\begin{eqnarray}
\int_{-\infty}^\infty \left[1 - \left(1 + \frac{1}{2}x^2\right) e^{-\frac{1}{2} x^2}\right] d\mu_\sigma(x)
= 1 - \left(1 + \frac{1}{2}\frac{\sigma^2}{1+\sigma^2}\right)\frac{1}{\sqrt{1+\sigma^2}}
\end{eqnarray}
and
\begin{eqnarray}
\int_{-\infty}^\infty \left|\left(\widehat{\mu}_{\sigma}\right)^\prime(x)\right| dx =2.
\end{eqnarray}
Furthermore,
\begin{eqnarray}
\int_{-\infty}^\infty \left|f_\sigma^{\prime \prime}(x)\right|dx = \sigma^2 - 4 f_\sigma^\prime\left(R_\sigma\right)
\end{eqnarray}
with $R_\sigma$ the strictly positive zero of $f_\sigma^{\prime \prime}$.
 
If $\sigma = 1.7$, then $1.4912 < R_\sigma < 1.4914$ and applying (\ref{abstrIneq}) in this case reveals that we can take $C_{\psi_{1/2}} \approx 20.19$.
\end{proof}

\paragraph{\bf{Remark}:} The authors have attempted to sharpen the upper bound for $C_{\psi_{1/2}}$ by applying Theorem \ref{AbstractThmLowerBound} to non-normal probability distributions, but have not succeeded so far.

\begin{proof}(Theorem \ref{thmBHallIneqour})
Combine (\ref{CompPsi}) and (\ref{psiIneq}).
\end{proof}

\section*{Appendix B : The theoretical framework behind the calculations}

\paragraph{} Since there are many metrics which metrize the weak topology, the reader may well ask why we have singled out the Kolmogorov metric $K$ in our estimations and why we would be interested in assessing $\displaystyle{\limsup_{n \rightarrow \infty} K\left(\xi,\sum_{k=1}^n \xi_{n,k}\right)}$ the way we did. The deeper fundamental reason for this is to be found in approach theory (\cite{BeTop},\cite{BeAn},\cite{Lo}). 

\paragraph{} Whereas convergence is handled in the weak topology, notions of approximate convergence can only be handled in a structure that allows for numbers, such as e.g.~a metric. In the theory of approximation in metric or normed spaces such notions exist. Indeed, asymptotic radius and asymptotic center of a sequence  are well-known (\cite{beny}, \cite{bos}, \cite{Edel},\cite{Ha},\cite{lam}). Approach theory makes these concepts available in a much wider setting than merely metric spaces. 

\paragraph{} An approach structure on a set can be thought of as a generalized metric structure in the sense that it is also given by a distance (which we usually denote by $\delta$), but not between pairs of points but between points and sets. As in the case of metric spaces, the natural mappings between approach spaces are contractions. Another analogy with metrics is that a distance $\delta$ has a canonical underlying topology which we sometimes refer to as the topology distancized by $\delta$. Since we do not want to go into details and technicalities here, we refer the interested reader to \cite{Lo}. 

\paragraph{} Why do we then nevertheless end up with a metric, and why the Kolmogorov metric? This is not an arbitrary and ad-hoc choice, it is dictated by the theory, just like weak convergence is dictated by the weak topology. Consider the following setup.

\paragraph{} Let $\mathcal{F}$ (respectively $\mathcal{F}_c$) stand for the collection of probability distributions (respectively continuous probability distributions) on the real line and convolution is denoted by $\star$. Now one of the main principles of Bergstr{\"o}m's direct convolution method (\cite{BergCLT},\cite{Ra}) is that weak convergence in $\mathcal{F}$ of a sequence $(\eta_n)_n$ to $\eta$ is equivalent with uniform convergence of the sequence $\left(\eta_n \star \zeta\right)_n$ to $\eta \star \zeta$ for every continuous $\zeta$. In other words, if we let $\mathcal{T}_w$ stand for the topology of weak convergence on $\mathcal{F}$ and $\mathcal{T}_K$ for the topology of uniform convergence (i.e.~generated by the Kolmogorov metric) on $\mathcal{F}_c$, then $\mathcal{T}_w$ is the weakest topology on $\mathcal{F}$ making all mappings
\begin{eqnarray}
\left(\mathcal{F} \rightarrow \left(\mathcal{F}_c,\mathcal{T}_K\right) : \eta \mapsto \eta \star \zeta\right)_{\zeta \in \mathcal{F}_c}\label{GenSource}
\end{eqnarray}
continuous. Fortunately, we can take the weakest topology with the above property, but this is something we cannot do with metrics, and this is where approach theory comes to the rescue. 

\paragraph{} If we replace the uniform topology $\mathcal{T}_K$ in (\ref{GenSource}) by its generating metric $K$, then we end up with the mappings
\begin{eqnarray}
\left(\mathcal{F} \rightarrow \left(\mathcal{F}_c,K\right) : \eta \mapsto \eta \star \zeta\right)_{\zeta \in \mathcal{F}_c}.\label{NumGenSource}
\end{eqnarray}
We are not able to construct a weakest metric on $\mathcal{F}$, metrizing the weak topology and making all mappings in (\ref{NumGenSource}) contractive, it simply does not exist. However, approach spaces allow for so much flexibility that we are able to construct a weakest distance on $\mathcal{F}$, distancizing the weak topology and making all mappings in (\ref{NumGenSource}) contractive. We are interested in the approach structure determined by the latter distance. We call it the continuity approach structure and we denote it by $\delta_c$. So we introduced $\delta_c$ in exactly the same way as $\mathcal{T}_w$.

\paragraph{} It can be shown that the actual distance $\delta_c$ is given by
\begin{eqnarray}
\delta_c\left(\eta,\mathcal{D}\right)
= \sup_{\mathcal{F}_0} \myinf_{\psi \in \mathcal{D}} \sup_{\zeta \in \mathcal{F}_0} K\left(\eta \star \zeta, \psi \star \zeta \right)
\end{eqnarray}
where $\eta \in \mathcal{F}$ and $\mathcal{D} \subset \mathcal{F}$ and where the first supremum runs over all finite subsets $\mathcal{F}_0$ of $\mathcal{F}_c$.

\paragraph{} The nature of approach structures is such that they allow for many topologi-cal-like considerations. Thus in a topological space we can speak of convergent sequences and in an approach space we can speak of the limit operator of a sequence which in our case  is given by
\begin{eqnarray}
\lambda_c(\eta_n \rightarrow \eta)
= \sup_{\zeta \in \mathcal{F}_c} \limsup_{n \rightarrow \infty} K\left(\eta \star \zeta, \eta_n \star \zeta \right).
\end{eqnarray}
Notice that $(\eta_n)_n$ converges weakly to $\eta$ if and only if $\displaystyle{\lambda_c\left(\eta_n \rightarrow \eta\right) = 0}$.

\paragraph{} The above $\delta_c$ is not a metric, it is a genuine approach structure and $\lambda_c$ is not an asymptotic radius. So why then do we nevertheless end up with just the number $\displaystyle{\limsup_{n \rightarrow \infty} K\left(\xi,\sum_{k=1}^n \xi_{n,k}\right)}$? This is because of the peculiarity of the present set-up. 

\paragraph{} We can prove the following results, where $j(\eta)$ stands for the supremum of the discontinuity jumps of the distribution $\eta$ and where $\delta_K$ stands for the distance generated by the metric $K$ in the usual way, i.e.
\begin{eqnarray}
\delta_K(\eta, \mathcal{D}) = \inf_{\psi \in \mathcal{D}} K(\eta, \psi).
\end{eqnarray}

\begin{thmB}\label{CompDcKThm}
For $\eta \in \mathcal{F}$ and $\mathcal{D} \subset \mathcal{F}$
\begin{eqnarray}
\delta_c(\eta,\mathcal{D}) \leq \delta_{K}(\eta,\mathcal{D}) \leq \delta_c(\eta,\mathcal{D}) + j(\eta).
\end{eqnarray}
\end{thmB}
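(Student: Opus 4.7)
The plan is to prove the two halves of the chain separately. For the easy direction $\delta_c(\eta, \mathcal{D}) \leq \delta_K(\eta, \mathcal{D})$, I rely on the fact that convolution with any probability distribution is a Kolmogorov contraction: writing $F_{\eta \star \zeta}(x) = \int F_\eta(x - z) d\zeta(z)$ and estimating under the integral gives $K(\eta \star \zeta, \psi \star \zeta) \leq K(\eta, \psi)$ for every $\zeta \in \mathcal{F}_c$ and every $\psi \in \mathcal{D}$. Hence for any finite $\mathcal{F}_0 \subset \mathcal{F}_c$, $\sup_{\zeta \in \mathcal{F}_0} K(\eta \star \zeta, \psi \star \zeta) \leq K(\eta, \psi)$; taking the infimum over $\psi \in \mathcal{D}$ and then the supremum over $\mathcal{F}_0$ yields the inequality.

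\paragraph{} For the converse $\delta_K(\eta, \mathcal{D}) \leq \delta_c(\eta, \mathcal{D}) + j(\eta)$, fix $\epsilon > 0$ and let $\zeta_\sigma$ be the uniform distribution on $[-\sigma, \sigma]$, which lies in $\mathcal{F}_c$. Specializing the definition of $\delta_c$ to the singleton $\mathcal{F}_0 = \{\zeta_\sigma\}$ yields a $\psi \in \mathcal{D}$ with $K(\eta \star \zeta_\sigma, \psi \star \zeta_\sigma) \leq \delta_c(\eta, \mathcal{D}) + \epsilon$. Because $\zeta_\sigma$ is supported on $[-\sigma, \sigma]$, the monotonicity of distribution functions yields the pointwise sandwich
\begin{displaymath}
F_{\mu \star \zeta_\sigma}(x - \sigma) \leq F_\mu(x) \leq F_{\mu \star \zeta_\sigma}(x + \sigma)
\end{displaymath}
for every probability $\mu$, which I will apply to $\mu = \eta$ and $\mu = \psi$.

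\paragraph{} Chaining this sandwich with the pointwise Kolmogorov estimate on $\eta \star \zeta_\sigma$ versus $\psi \star \zeta_\sigma$ gives
\begin{displaymath}
F_\psi(x) \leq F_{\psi \star \zeta_\sigma}(x + \sigma) \leq F_{\eta \star \zeta_\sigma}(x + \sigma) + \delta_c(\eta, \mathcal{D}) + \epsilon \leq F_\eta(x + 2\sigma) + \delta_c(\eta, \mathcal{D}) + \epsilon,
\end{displaymath}
so $F_\psi(x) - F_\eta(x) \leq [F_\eta(x + 2\sigma) - F_\eta(x)] + \delta_c(\eta, \mathcal{D}) + \epsilon$. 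The symmetric chain starting from $F_\eta(x) \leq F_{\eta \star \zeta_\sigma}(x + \sigma)$ and routing through $\psi \star \zeta_\sigma$ yields $F_\eta(x) - F_\psi(x) \leq [F_\eta(x) - F_\eta(x - 2\sigma)] + \delta_c(\eta, \mathcal{D}) + \epsilon$. Taking the supremum over $x$ gives $K(\eta, \psi) \leq \omega_\eta(2\sigma) + \delta_c(\eta, \mathcal{D}) + \epsilon$, where $\omega_\eta(h) = \sup_x [F_\eta(x + h) - F_\eta(x)]$.

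\paragraph{} The principal obstacle is the modulus-of-continuity identity $\lim_{h \downarrow 0} \omega_\eta(h) = j(\eta)$. The lower bound $\omega_\eta(h) \geq j(\eta)$ is immediate: evaluating at $x = x_0 - h$ near any jump point $x_0$ of $F_\eta$ gives $F_\eta(x_0) - F_\eta(x_0 - h) \geq F_\eta(x_0) - F_\eta(x_0 -)$. For the reverse, I plan a compactness argument on the extended real line: if $\limsup_{h \downarrow 0} \omega_\eta(h) > j(\eta)$ were to hold, one would find sequences $h_n \downarrow 0$ and $x_n$ with $F_\eta(x_n + h_n) - F_\eta(x_n) \geq j(\eta) + \delta$ for some $\delta > 0$; after passing to a subsequence with $x_n \to x^* \in [-\infty, +\infty]$, right-continuity and monotonicity force the increment to collapse to at most the jump of $F_\eta$ at $x^*$ (or to zero at $\pm \infty$), a contradiction. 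Once this lemma is in hand, choosing $\sigma$ so that $\omega_\eta(2\sigma) \leq j(\eta) + \epsilon$ yields $\delta_K(\eta, \mathcal{D}) \leq K(\eta, \psi) \leq \delta_c(\eta, \mathcal{D}) + j(\eta) + 2\epsilon$, and letting $\epsilon \downarrow 0$ completes the proof.
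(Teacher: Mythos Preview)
The paper states this theorem in Appendix B without proof (it is announced with ``We can prove the following results'' but no argument is supplied), so there is no authorial proof to compare against. Evaluated on its own, your argument is correct and essentially the natural one.

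A few small remarks. For the first inequality, the contraction $K(\eta\star\zeta,\psi\star\zeta)\le K(\eta,\psi)$ indeed follows from $F_{\eta\star\zeta}(x)=\int F_\eta(x-z)\,d\zeta(z)$ and estimating under the integral; this holds for arbitrary $\zeta$, not just continuous ones, which is more than you need. For the second inequality, your ``symmetric chain'' is more cleanly written from below: $F_\psi(x)\ge F_{\psi\star\zeta_\sigma}(x-\sigma)\ge F_{\eta\star\zeta_\sigma}(x-\sigma)-(\delta_c(\eta,\mathcal D)+\epsilon)\ge F_\eta(x-2\sigma)-(\delta_c(\eta,\mathcal D)+\epsilon)$, which gives exactly the bound you state. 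The order of quantifiers is right: you first fix $\epsilon$, then pick $\sigma$ so small that $\omega_\eta(2\sigma)\le j(\eta)+\epsilon$, and only then invoke the definition of $\delta_c$ with $\mathcal F_0=\{\zeta_\sigma\}$ to extract $\psi$; since $\delta_K(\eta,\mathcal D)\le K(\eta,\psi)$ for this particular $\psi$, the dependence of $\psi$ on $\sigma$ is harmless. Finally, your compactness argument for $\lim_{h\downarrow 0}\omega_\eta(h)=j(\eta)$ is sound: in the finite-limit case one bounds $F_\eta(x_n+h_n)-F_\eta(x_n)\le F_\eta(x^*+\rho)-F_\eta(x^*-\rho)$ for $n$ large, and right-continuity together with existence of the left limit at $x^*$ shows this tends to the jump at $x^*$ as $\rho\downarrow 0$, which is at most $j(\eta)$.
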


\begin{gevB}\label{EqDcKc}
For $\eta  \in \mathcal{F}_c$ and $\mathcal{D} \subset \mathcal{F}$
\begin{eqnarray}
\delta_c(\eta,\mathcal{D}) = \delta_{K}(\eta,\mathcal{D}). 
\end{eqnarray}
\end{gevB}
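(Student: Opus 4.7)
My plan is to derive Corollary \ref{EqDcKc} as an immediate consequence of the sandwich inequality in Theorem \ref{CompDcKThm}. The key observation is that the quantity $j(\eta)$, being the supremum of the discontinuity jumps of the distribution $\eta$, vanishes precisely when $\eta$ is continuously distributed. Hence the hypothesis $\eta \in \mathcal{F}_c$ forces $j(\eta) = 0$, and the two inequalities of Theorem \ref{CompDcKThm} collapse to a single equality.

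More concretely, I would first invoke Theorem \ref{CompDcKThm} to write
\begin{eqnarray*}
\delta_c(\eta,\mathcal{D}) \leq \delta_K(\eta,\mathcal{D}) \leq \delta_c(\eta,\mathcal{D}) + j(\eta).
\end{eqnarray*}
Then I would observe that for $\eta \in \mathcal{F}_c$ the associated cumulative distribution function is continuous, so that $\mathbb{P}[\eta = x] = 0$ for every $x \in \mathbb{R}$, which by definition of $j(\eta)$ gives $j(\eta) = 0$. Substituting this into the right-hand estimate yields $\delta_K(\eta,\mathcal{D}) \leq \delta_c(\eta,\mathcal{D})$, and combining this with the left-hand inequality produces the claimed equality $\delta_c(\eta,\mathcal{D}) = \delta_K(\eta,\mathcal{D})$.

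There is essentially no obstacle here: the entire content of the corollary sits inside Theorem \ref{CompDcKThm}, and the role of the continuity hypothesis is only to eliminate the slack term $j(\eta)$. The only point one might want to double-check is the precise convention used by the authors for $j(\eta)$ (namely that a continuous distribution has $j(\eta) = 0$, which is standard), but this is a definitional remark rather than a step requiring proof.
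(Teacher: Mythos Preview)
Your proposal is correct and matches the paper's intended approach: the statement is labeled a corollary of Theorem \ref{CompDcKThm} and no separate proof is given, precisely because the continuity hypothesis $\eta \in \mathcal{F}_c$ makes $j(\eta)=0$ and collapses the sandwich inequality to an equality, exactly as you argue.
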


\begin{gevB}\label{EqLimcKc}
For $\eta \in \mathcal{F}_c$ and $\left(\eta_n\right)_n$ in $\mathcal{F}$
\begin{eqnarray}
\lambda_c\left(\eta_n \rightarrow \eta\right) = \limsup_{n \rightarrow \infty} K\left(\eta,\eta_n\right).\label{explLimOp}
\end{eqnarray}
\end{gevB}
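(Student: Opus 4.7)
The equality in \eqref{explLimOp} splits into two inequalities. For $\lambda_c(\eta_n \rightarrow \eta) \leq \limsup_{n \rightarrow \infty} K(\eta,\eta_n)$ the plan is to exploit the contractivity of $K$ under convolution: a direct integration argument shows that $K(\eta \star \zeta, \eta_n \star \zeta) \leq K(\eta, \eta_n)$ for every $\zeta \in \mathcal{F}$, since $F_{\eta \star \zeta} - F_{\eta_n \star \zeta}$ is, pointwise in $x$, the $\zeta$-average of $y \mapsto F_\eta(x-y) - F_{\eta_n}(x-y)$, which is bounded in absolute value by $K(\eta,\eta_n)$. Applying $\limsup_n$ followed by $\sup_{\zeta \in \mathcal{F}_c}$ immediately gives the inequality.

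The reverse inequality $\lambda_c(\eta_n \rightarrow \eta) \geq \limsup_{n \rightarrow \infty} K(\eta,\eta_n)$ is the heart of the matter, as it amounts to interchanging $\sup_\zeta$ and $\limsup_n$, and only the trivial bound $\sup \limsup \leq \limsup \sup$ holds in general. I would proceed as follows. Set $L = \limsup_n K(\eta, \eta_n)$ and fix $\epsilon > 0$. Extract a subsequence $(\eta_{n_k})_k$ with $K(\eta,\eta_{n_k}) > L - \epsilon$ for every $k$, and put $\mathcal{D} = \{\eta_{n_k} : k \geq 1\}$. Then $\delta_K(\eta, \mathcal{D}) = \inf_k K(\eta,\eta_{n_k}) \geq L - \epsilon$, and Corollary~\ref{EqDcKc} upgrades this to $\delta_c(\eta, \mathcal{D}) \geq L - \epsilon$. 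Unpacking the definition of $\delta_c$, one obtains a finite subset $\mathcal{F}_0 = \{\zeta_1, \ldots, \zeta_m\} \subset \mathcal{F}_c$ with
\[
\sup_{\zeta \in \mathcal{F}_0} K\left(\eta \star \zeta, \eta_{n_k} \star \zeta\right) > L - 2\epsilon \qquad \text{for every } k.
\]

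To conclude, I would attach to each $k$ an index $i(k) \in \{1, \ldots, m\}$ attaining the supremum above and invoke the pigeonhole principle: some value $i^\ast$ occurs for infinitely many $k$, so $\limsup_n K(\eta \star \zeta_{i^\ast}, \eta_n \star \zeta_{i^\ast}) \geq L - 2\epsilon$. Taking $\sup_\zeta$ and letting $\epsilon \downarrow 0$ yields $\lambda_c(\eta_n \rightarrow \eta) \geq L$. The main obstacle is precisely the $\sup/\limsup$ interchange; the feature that makes the plan work is the \emph{finiteness} built into the definition of $\delta_c$ (supremum over finite subsets $\mathcal{F}_0$ of $\mathcal{F}_c$), since this is exactly what enables the pigeonhole argument and lets a single $\zeta_{i^\ast}$ witness the lower bound uniformly along a tail of the subsequence.
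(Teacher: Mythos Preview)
The paper states Corollary~\ref{EqLimcKc} without proof, listing it alongside Theorem~\ref{CompDcKThm} and Corollary~\ref{EqDcKc} as results that ``we can prove'' about the continuity approach structure; the intended derivation is presumably from Corollary~\ref{EqDcKc} together with the general approach-theoretic link between distances and limit operators. Your argument is correct and makes this derivation fully explicit.

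Your treatment of the easy inequality via contractivity of $K$ under convolution is clean and standard. For the reverse inequality you correctly identify the obstacle (the illegitimate interchange $\sup_\zeta \limsup_n \geq \limsup_n \sup_\zeta$) and circumvent it by passing through $\delta_c$: extracting a subsequence along which $K(\eta,\eta_{n_k}) > L-\epsilon$, applying Corollary~\ref{EqDcKc} to $\mathcal{D}=\{\eta_{n_k}\}$, and then using the crucial fact that the outer supremum in the definition of $\delta_c$ runs over \emph{finite} subsets $\mathcal{F}_0\subset\mathcal{F}_c$. This finiteness is precisely what licenses the pigeonhole step producing a single $\zeta_{i^\ast}$ that witnesses the lower bound along an infinite sub-subsequence, hence along the full sequence via $\limsup$. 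Every step checks out; in particular, your reading of $\delta_c(\eta,\mathcal{D})\geq L-\epsilon$ as yielding a finite $\mathcal{F}_0$ with $\inf_k \sup_{\zeta\in\mathcal{F}_0} K(\eta\star\zeta,\eta_{n_k}\star\zeta) > L-2\epsilon$ is exactly what the $\sup_{\mathcal{F}_0}\inf_{\psi\in\mathcal{D}}\sup_{\zeta\in\mathcal{F}_0}$ structure delivers.
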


\paragraph{} The number $\displaystyle{\limsup_{n \rightarrow \infty} K\left(\xi,\sum_{k=1}^n \xi_{n,k}\right)}$ is therefore not ad-hoc, it is nothing else but the limit-operator of the sequence $\displaystyle{\left(\sum_{k=1}^n \xi_{n,k}\right)_n}$ evaluated in $\xi$ for a canonical approach structure on the set of all probability distributions on the real line, and only in our particular case, where we are dealing with a continuous limit distribution, there is equality between the limit-operator for the continuity approach structure $\delta_c$ and the asymptotic radius for the Kolmogorov metric $K$.


\begin{thebibliography}{99}
\bibitem{BeTop} Berckmoes, B.; Lowen, R.; Van Casteren, J. {\em Approach theory meets probability theory} Topology Appl. 158, no. 7, p. 836-852, 2011
\bibitem{BeAn} Berckmoes, B.; Lowen, R.; Van Casteren, J. {\em Distances on probability measures and random variables} J. Math. Anal. Appl. 374, no. 2, p. 412-428, 2011
\bibitem{Barbour} Barbour A.D.; Chen L.H.Y. {\em An introduction to Stein's method} Singapore University Press, Singapore; World Scientific Publishing Co. Pte. Ltd., Hackensack, NJ, 2005
\bibitem{BHall} Barbour A.D.; Hall P. {\em Stein's method and the Berry-Esseen theorem} Austral. J. Statist. 26, p. 8-15, 1984
\bibitem{beny} Benyamini Y. {\em Asymptotic centers and best approximation of operators into C(K)} Constructive Approximation 1, p. 217-229, 1985
\bibitem{BergCLT} Bergstr{\"o}m, H. {\em On the central limit theorem in the case of not equally distributed random variables} Skand. Aktuarietidskr. 32, p. 37-62, 1949
\bibitem{Berg} Bergstr{\"o}m, H. {\em Weak convergence of measures} Probability and Mathematical Statistics. Academic Press, Inc. [Harcourt Brace Jovanovich, Publishers], New York-London, 1982
\bibitem{bos} Bose, S.C. {\em Weak convergence to the fixed point of an asymptotically nonexpansive map} Proc. Amer. Math. Soc. 68 (3), p. 305-308, 1978
\bibitem{Edel} Edelstein, M. {\em The construction of an asymptotic center with a fixed-point property} Bull. Amer. Math. Soc. 78, p.206-208, 1972
\bibitem{Fel} Feller, W. {\em An introduction to probability theory and its applications} Vol. II. Second edition John Wiley \& Sons, Inc., New York-London-Sydney, 1971 
\bibitem{Ha} Haddadi, M.R.; Mazaheri, H.; Labbaf G.M.H. {\em Relation between fixed point and asymptotical center of nonexpansive maps.} Fixed Point Theory Appl. 2011
\bibitem{Kal} Kallenberg, O. \textit{Foundations of modern probability} Second edition. Probability and its Applications (New York). Springer-Verlag, New York, 2002.
\bibitem{lam} Lami Dozo, {\em Asymptotic centers in particular spaces}, Lecture Notes in Mathematics, Vol 886, p. 199-207, 1981
\bibitem{Lo} Lowen, R. \textit{Approach Spaces: The Missing Link in the Topology-Uniformity-Metric Triad} Oxford Mathematical Monographs, Oxford University Press, 1997
\bibitem{Ra} R$\mathring{\textrm{\upshape{a}}}$de, L. {\em A conversation with Harald Bergstr{\"o}m} Statist. Sci. 12, no. 1, p. 53-60, 1997
\bibitem{Stein1} Stein, C. {\em A bound for the error in the normal approximation to the distribution of a sum of dependent random variables} Proceedings of the Sixth Berkeley Symposium on Mathematical Statistics and Probability (Univ. California, Berkeley, Calif., 1970/1971), Vol. II: Probability theory, p. 583-602. Univ. California Press, Berkeley, Calif., 1972
\bibitem{Stein2} Stein C. {\em Approximate computation of expectations} Institute of Mathematical Statistics Lecture Notes-Monograph Series, 7. Institute of Mathematical Statistics, Hayward, CA, 1986
\end{thebibliography}
\end{document}